\long\def\@makefntext#1{\noindent #1}
\newskip\tabcentering \tabcentering=1000pt plus 1000pt minus 1000pt
\def\MCH#1#2{\setbox0=\hbox{\raise#1\hbox{#2}}\smash{\box0}}
\let\@oddfoot\@empty  \let\@evenfoot\@empty
\def\@cite#1#2{{#1\if@tempswa,#2\fi}}
\renewcommand\@biblabel[1]{[${#1}$]}
\def\@evenhead{\thepage \hbox to\textwidth{\hfill\footnotesize{\it Guo TieXin, Zeng XiaoLin~~~}}}
\def\@oddhead{\hbox to \textwidth{\footnotesize{\it
An $L^{0}({\cal F},R)-$valued function's intermediate value theorem and its applications} \hfill\thepage}}
\begin{document}

\renewcommand{\theequation}{\arabic{section}.\arabic{equation}}
\makeatletter      
\@addtoreset{equation}{section}
\makeatother       

\renewcommand{\thetable}{\arabic{table}}
\renewcommand{\thefigure}{\arabic{figure}}

\newtheoremstyle{mythm}{3pt}{3pt}{}{}{\bfseries}{}{2mm}{}
\theoremstyle{mythm}

\newtheorem{definition}{{\bf{Definition}}}[section]
\newtheorem{proposition}{{\bf{Proposition}}}[section]
\newtheorem{property}{{\bf{Property}}}[section]
\newtheorem{theorem}{{\bf{Theorem}}}[section]
\newtheorem{lemma}{{\bf{Lemma}}}[section]
\newtheorem{corollary}{{\bf{Corollary}}}[section]
\newtheorem{axiom}{\hspace{2em}{\bf{Axiom}}}[section]
\newtheorem{exercise}{\hspace{2em}{\bf{Exercise}}}[section]
\newtheorem{question}{\hspace{2em}{\bf{Question}}}
\newtheorem{example}{{\bf{Example}}}
\newtheorem{notation}{\hspace{2em}{\bf{Notation}}}
\newtheorem{remark}{\hspace{2em}{\bf{Remark}}}

\newcommand{\authorsinfo}{Corresponding author}

\title{\bf An $L^{0}({\cal F},R)-$valued function's intermediate value theorem and its applications to random uniform convexity\thanks{This work is supported by the National Natural Science Foundation of China (No. 10871016).}}

\author{\bf{Guo TieXin$^{1}$\footnote{\authorsinfo}\,,\,\,Zeng XiaoLin}$^{2}$\\[-1pt]
\textit{\small LMIB and School of Mathematics and Systems Science,
Beihang University,
Beijing 100191, PR.} \\[-2mm]
\textit{\small China}\\[-2pt]
\textit{\small E-mail: $^{1}$txguo@buaa.edu.cn $^{2}$xlinzeng@ss.buaa.edu.cn}}

\date{}
\maketitle

\vspace{-6mm}

\vskip 1 mm

\noindent{\small {\small\bf Abstract}~~Let $(\Omega,{\cal F},P)$ be a probability space and $L^{0}({\cal F},R)$ the algebra of
equivalence classes of real-valued random variables on $(\Omega,{\cal F},P)$. When $L^{0}({\cal F},R)$ is endowed
with the topology of convergence in probability, we prove an intermediate value theorem for a continuous local
function from $L^{0}({\cal F},R)$ to $L^{0}({\cal F},R)$. As applications of this theorem, we first give several
useful expressions for modulus of random convexity, then we prove that a complete random normed module
$(S,\|\cdot\|)$ is random uniformly convex iff $L^{p}(S)$ is uniformly convex for each fixed positive number $p$
such that $1<p<+\infty$. \ \

\vspace{1mm}\baselineskip 12pt

\noindent{\small\bf Keywords} ~~$L^{0}({\cal F},R)-$valued function, intermediate value theorem, random
normed module, random uniform convexity, modulus of random convexity\ \

\noindent{\small\bf MR(2000) Subject Classification}~ 46A22, 46B20, 46E30\ \ {\rm }}

\section{Introduction}

Based on the analysis of stratification structure on random normed modules, in \cite{Guo-Zeng} we introduced the notions of random strict convexity and random uniform convexity in random normed modules and gave the perfect relation between random strict convexity and classical strict convexity. However, when we also attempted to give the similar relation between random uniform convexity and classical uniform convexity in \cite{Guo-Zeng} we encountered some difficulties, which made us only obtain a not very pleasant result in \cite{Guo-Zeng}. The purpose of this paper is to overcome the difficulties so that we can give the perfect relation between random uniform convexity and classical uniform convexity. Besides, this paper also gives several useful expressions for modulus of random convexity. In particular, we give an $L^{0}({\cal F},R)-$valued function's intermediate value theorem, which will play an essential role in the proofs of the above main results.

To introduce the main results of this paper, let us first recall some notation and terminology together with some known notions.

Throughout this paper, $(\Omega,{\cal F},P)$ always denotes a probability space, $K$ the scalar field $R$ of real numbers or $C$ of complex numbers, $\bar{L}^{0}({\cal F},R)$ the set of equivalence classes of extended real-valued random variables on $(\Omega,{\cal F},P)$ and ${L}^{0}({\cal F},K)$ the algebra of equivalence classes of $K$-valued random variables on $(\Omega,{\cal F},P)$.

It is well known from \cite{Dunford-Schwartz} that $\bar{L}^{0}({\cal
F},{R})$ is a complete lattice under the ordering $\leqslant$:
$\xi\leqslant \eta$ iff $\xi^{0}(\omega)\leqslant\eta^{0}(\omega)$
for $P-$almost all $\omega$ in $\Omega$ (briefly, a.s.), where
$\xi^{0}$ and $\eta^{0}$ are arbitrarily chosen representatives of
$\xi$ and $\eta$, respectively. Furthermore, every subset $A$ of
$\bar{L}^{0}({\cal F},{R})$ has a supremum, denoted by $\vee A$, and
an infimum, denoted by $\wedge A$, and there exist two sequences
$\{a_{n},n\in N\}$ and ${\{b_{n},n\in N}\}$ in $A$ such that
$\vee_{n\geqslant1}$ $a_{n}=\vee A$ and $\wedge_{n\geqslant1}$
$b_{n}=\wedge A$. If, in addition, $A$ is directed (accordingly,
dually directed), then the above $\{a_{n},n\in N\}$
(accordingly, $\{b_{n},n\in N\}$) can be chosen as nondecreasing
(accordingly, nonincreasing). Finally $L^{0}({\cal F},R)$, as a
sublattice of $\bar{L}^{0}({\cal F},{R})$, is complete in the sense
that every subset with an upper bound has a supremum (equivalently,
every subset with a lower bound has an infimum).

 Specially, let
$\bar{L}^{0}_{+}=\{\xi\in \bar{L}^{0}({\cal
F},{R})\,|\,\xi\geqslant0\}, L^{0}_{+}=\{\xi\in L^{0}({\cal
F},R)\,|\,\xi\geqslant0\}$ and $L^{0}_{++}=\{\xi\in L^{0}({\cal
F},R)\,|\,\xi>0$ on $\Omega\}$, where $\xi>0$ on $\Omega$ means that $\xi^{0}(\omega)>0$ a.s. on $\Omega$ for an arbitrarily chosen representative $\xi^{0}$ of $\xi$.

 \begin{definition}[Guo \cite{Guotx-basictheory}]\label{def:RNmodule} An ordered pair $(S,\|\cdot\|)$ is called a random normed space (briefly, an $RN$ space) over $K$ with base $(\Omega,{\cal F},P)$ if $S$ is a linear space over $K$ and $\|\cdot\|$ is a mapping from $S$ to $L^{0}_{+}$ such that the following axioms are satisfied:\\
\indent($RN$-1) $\|\alpha x\|=|\alpha|\|x\|,\forall\alpha\in K$ and $x\in S$;\\
\indent($RN$-2) $\|x+y\|\leq\|x\|+\|y\|,\forall x,y\in S$;\\
\indent($RN$-3) $\|x\|=0$ implies $x=\theta$ (the null vector in $S$).\\
Where $\|x\|$ is called the random norm of the vector $x$.

In addition, if $S$ is a left module over the algebra ${L}^{0}({\cal F},K)$ and $\|\cdot\|$ also satisfies the following:\\
\indent($RNM$-1) $\|\xi x\|=|\xi|\|x\|,\forall\xi\in {L}^{0}({\cal F},K)$ and $x\in S$.\\
Then such an $RN$ space $(S,\|\cdot\|)$ is called a random normed module (briefly, an $RN$ module) over $K$ with base $(\Omega,{\cal F},P)$, such a random norm $\|\cdot\|$ is called an ${L}^{0}-$norm.
\end{definition}

\begin{example}\label{exa:RNmodule}
($L^{0}({\cal F},K),|\cdot|$) is an $RN$ module over $K$ with base $(\Omega,{\cal F},P)$, where the $L^{0}-$norm $|x|$ of any $x\in L^{0}({\cal F},K)$ is defined to be the equivalence class of the composite function $|x^{0}|:\Omega\rightarrow [0,+\infty)$, namely $|x^{0}|(\omega)=|x^{0}(\omega)|,\forall\omega\in\Omega$, where $x^{0}$ is an arbitrarily chosen representative of $x$.
\end{example}

 \begin{definition}[Guo \cite{Guotx-basictheory}]\label{def:weaktop}
 Let $(S,\|\cdot\|)$ be an $RN$ space over $K$ with
base $(\Omega,{\cal F},P)$. Given any $\epsilon>0,0<\lambda<1$, let
$N(\epsilon,\lambda)=\{x\in S~|~P\{\omega\in\Omega:\|x\|(\omega)<\epsilon\}>1-\lambda\}$, then
the family ${\cal U}_{\theta}=\{N(\epsilon,\lambda)~|~
\epsilon>0,0<\lambda<1\}$ forms a local base at the null element $\theta$ of some metrizable
linear topology for $S$, called {\it the
$(\epsilon,\lambda)-$topology for $S$}.
\end{definition}

\begin{proposition}[Guo \cite{Guotx-basictheory}]\label{prop:weaktop}
 Let $(S,\|\cdot\|)$ be an $RN$ module over $K$ with base $(\Omega,{\cal F},P)$, then\\
\indent(1) The $(\epsilon,\lambda)-$topology for $L^{0}({\cal F},K)$ is exactly the topology of convergence in probability $P$;\\
\indent(2) $L^{0}({\cal F},K)$ is a topological algebra under the $(\epsilon,\lambda)-$topology.\\
\indent(3) $S$ is a topological module over the topological algebra $L^{0}({\cal F},K)$ when $S$ and $L^{0}({\cal F},K)$ are endowed with their respective $(\epsilon,\lambda)-$topologies.
\end{proposition}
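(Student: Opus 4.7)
The plan is to verify the three assertions separately, with (1) being essentially a translation of definitions, (2) reducing to standard facts about convergence in probability, and (3) following from (2) together with axiom $(RNM$-$1)$.

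For (1), the key observation is that the basic neighborhood $N(\epsilon,\lambda)$ equals $\{x\in L^{0}(\mathcal{F},K):P\{|x|\geqslant\epsilon\}<\lambda\}$, which is precisely the standard basic neighborhood of $0$ defining the topology of convergence in probability (just reparametrize $\lambda \leftrightarrow 1-\lambda$). Since both topologies are translation invariant and have the same local base at $\theta$, they coincide. This also shows metrizability directly.

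For (2), I need continuity of addition, of scalar multiplication by $K$, and of the ring multiplication. The first two are immediate from standard properties of convergence in probability. For multiplication, since the topology is metrizable, sequential joint continuity suffices. Given $x_{n}\to x$ and $y_{n}\to y$ in probability, I would write
\[
x_{n}y_{n}-xy=(x_{n}-x)y+x_{n}(y_{n}-y),
\]
and handle each piece separately: $(x_{n}-x)y\to 0$ in probability because $|x_{n}-x|\to 0$ in probability while $|y|$ is a fixed random variable; for $x_{n}(y_{n}-y)$, I would use that $x_{n}\to x$ in probability implies $\{x_{n}\}$ is bounded in probability, i.e.\ for any $\delta>0$ there is $M>0$ with $P\{|x_{n}|>M\}<\delta/2$ eventually, and then estimate
\[
P\{|x_{n}(y_{n}-y)|\geqslant\epsilon\}\leqslant P\{|x_{n}|>M\}+P\{|y_{n}-y|\geqslant\epsilon/M\},
\]
both of which become arbitrarily small.

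For (3), by metrizability again I take sequences $\xi_{n}\to\xi$ in $L^{0}(\mathcal{F},K)$ and $x_{n}\to x$ in $S$. Using $(RNM$-$1)$ and $(RN$-$2)$,
\[
\|\xi_{n}x_{n}-\xi x\|\leqslant|\xi_{n}|\,\|x_{n}-x\|+|\xi_{n}-\xi|\,\|x\|.
\]
The second summand tends to $0$ in probability since $|\xi_{n}-\xi|\to 0$ in probability and $\|x\|$ is fixed. For the first summand, I use the same ``boundedness in probability'' argument as in (2), applied to $\{|\xi_{n}|\}$ and to the null sequence $\|x_{n}-x\|\to 0$ in probability. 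I expect the only mildly delicate point to be this boundedness-in-probability step (together with the metrizability reduction that legitimates working with sequences); everything else is routine measure-theoretic bookkeeping.
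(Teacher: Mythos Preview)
Your proposal is correct and is the standard route to these facts. However, the paper does not actually prove this proposition: it is stated with the attribution ``[Guo \cite{Guotx-basictheory}]'' and is treated as a known background result imported from that reference, with no proof supplied in the present paper. So there is no paper-proof to compare your argument against; your write-up simply fills in what the authors take for granted.
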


 \begin{definition} [Filipovi\'c, Kupper and Vogelpoth \cite{Filipovic}]
Let $(S,\|\cdot\|)$ be an $RN$ space over $K$ with base $(\Omega,{\cal F},P)$. Let $N_{\theta}(\epsilon)=\{x\in S~|~\|x\|\leq\epsilon\}$ for any $\epsilon\in L^{0}_{++}$, and let ${\cal T}_c$=$\{G\subset S~|~$for each $y\in G$ there exists some $\epsilon\in L^{0}_{++}$ such that $y+N_{\theta}(\epsilon)\subset G$ $\}$. Then ${\cal T}_c$ is a Hausdorff topology for $S$, called the locally $L^{0}-$convex topology.
\end{definition}

 Under the locally $L^{0}-$convex topology $L^{0}({\cal F},K)$ is a topological ring, which means that the locally $L^{0}-$convex topology for $L^{0}({\cal F},K)$ is not necessarily a linear topology, see \cite{Filipovic} for details. When $(S,\|\cdot\|)$ is an $RN$ module and is endowed with the locally $L^{0}-$convex topology, it is a Hausdorff topological module over the topological ring $L^{0}({\cal F},K)$.

In \cite{KV}, to study the subdifferential of a conditional convex risk measure, Kupper and Vogelpoth proved an interesting intermediate value theorem for a continuous local function $f$ from $L^{0}({\cal F},R)$ to $L^{0}({\cal F},R)$ when $L^{0}({\cal F},R)$ is endowed with its locally $L^{0}-$convex topology, where $f$ is called \emph{local} if $\tilde{I}_Af(x)=\tilde{I}_Af(\tilde{I}_Ax)$ for any $A\in{\cal F}$ and $x\in L^{0}({\cal F},R)$, here $\tilde{I}_A$ denotes the equivalence class of the characteristic function $I_A$ of $A$. However, in this paper we need an intermediate value theorem when $L^{0}({\cal F},R)$ is endowed with its $(\epsilon,\lambda)-$topology. Precisely speaking, we prove Theorem \ref{thm:IVT} below. For the sake of conciseness we briefly denote by $[\alpha_1,\alpha_2]$ the set $\{\alpha\in L^{0}({\cal F},R)~|~\alpha_1\leq\alpha\leq\alpha_2\}$ for any $\alpha_1,\alpha_2\in L^{0}({\cal F},R)$ with $\alpha_1\leq\alpha_2$.

\begin{theorem}\label{thm:IVT}
Let $f:L^{0}({\cal F},R)\rightarrow L^{0}({\cal F},R)$ be a continuous local function when $L^{0}({\cal F},R)$ is endowed with its $(\epsilon,\lambda)-$topology and $Y_1,Y_2\in L^{0}({\cal F},R)$ such that $Y_1\leq Y_2$. Then for any $\xi\in [f(Y_1)\wedge f(Y_2),f(Y_1)\vee f(Y_2)]$ there exists $\eta\in [Y_1,Y_2]$ such that $f(\eta)=\xi$.
\end{theorem}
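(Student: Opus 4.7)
The plan is to first use locality to paste two solutions and thereby reduce to the monotone sub-case $f(Y_1)\le\xi\le f(Y_2)$, and then to construct $\eta$ as the almost-sure limit of dyadic ``first-passage times'' along the segment from $Y_1$ to $Y_2$.

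For the reduction, I would set $A:=\{f(Y_1)\le f(Y_2)\}\in\mathcal{F}$. On $A$ one has $f(Y_1)\wedge f(Y_2)=f(Y_1)$ and $f(Y_1)\vee f(Y_2)=f(Y_2)$, so $f(Y_1)\le\xi\le f(Y_2)$ on $A$; symmetrically $f(Y_2)\le\xi\le f(Y_1)$ on $A^c$. Assuming the monotone sub-case is already solved, I would produce $\eta_A,\eta_{A^c}\in[Y_1,Y_2]$ with $f(\eta_A)=\xi$ on $A$ (applying the sub-case with the inequalities valid on $A$) and $f(\eta_{A^c})=\xi$ on $A^c$ (applying it after exchanging the roles of $Y_1$ and $Y_2$), and then paste via $\eta:=\tilde I_A\eta_A+\tilde I_{A^c}\eta_{A^c}$. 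Using locality in the form $\tilde I_A f(\eta)=\tilde I_A f(\tilde I_A\eta)=\tilde I_A f(\tilde I_A\eta_A)=\tilde I_A f(\eta_A)$ and the analogue on $A^c$, one gets $f(\eta)=\xi$ and $\eta\in[Y_1,Y_2]$.

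For the monotone sub-case, I would parametrize the segment by $g:[0,1]\to L^0(\mathcal F,R)$ defined by $g(t):=f((1-t)Y_1+tY_2)$; this is continuous in probability, since $t\mapsto(1-t)Y_1+tY_2$ is continuous into $L^0$ by Proposition \ref{prop:weaktop} and $f$ is continuous in the $(\epsilon,\lambda)$-topology. Fix once and for all representatives $g(k/2^n)^0$ for every dyadic $k/2^n\in[0,1]$ and every $n$, and define pointwise
\[
\tau_n(\omega):=\min\bigl\{k/2^n:\ 0\le k\le 2^n,\ g(k/2^n)^0(\omega)\ge\xi^0(\omega)\bigr\},
\]
which is $\mathcal F$-measurable and well defined because $g(1)=f(Y_2)\ge\xi$. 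Refinement of the grid shows $\tau_n$ is nonincreasing in $n$, hence $\tau_n\downarrow\tau$ a.s.\ for some $\tau\in L^0$ taking values in $[0,1]$. Setting $\eta_n:=(1-\tau_n)Y_1+\tau_nY_2$ and $\eta:=(1-\tau)Y_1+\tau Y_2\in[Y_1,Y_2]$, one has $\eta_n\to\eta$ a.s. Finite locality applied to the partition $\{\tau_n=k/2^n\}_{k=0}^{2^n}$ gives $f(\eta_n)=\sum_{k=0}^{2^n}\tilde I_{\{\tau_n=k/2^n\}}g(k/2^n)\ge\xi$ by the defining property of $\tau_n$; analogously, with $\tau_n':=\tau_n-2^{-n}$ and $\eta_n':=(1-\tau_n')Y_1+\tau_n'Y_2$, one obtains $f(\eta_n')<\xi$ on $\{\tau_n>0\}$.

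To finish, I would pass to the limit using continuity: since $\eta_n,\eta_n'\to\eta$ in probability, $f(\eta_n)$ and $f(\eta_n')$ converge to $f(\eta)$ in probability, and a common subsequence converges almost surely. This yields $f(\eta)\ge\xi$ on $\Omega$ and $f(\eta)\le\xi$ on $\{\tau>0\}$, while on $\{\tau=0\}$ one has $\eta=Y_1$ and the hypothesis $f(Y_1)\le\xi$ supplies the remaining inequality, so $f(\eta)=\xi$. I expect the main obstacle to lie in the careful interplay between pointwise, almost-sure, and in-probability notions: the dyadic infima must be built from a single fixed choice of representatives so that measurability and the defining inequalities of $\tau_n$ hold simultaneously off a single null set, and the passage from probability-convergence of $f(\eta_n),f(\eta_n')$ to pointwise inequalities against $\xi$ must be routed through a diagonal subsequence extraction. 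This is precisely the step at which continuity in the $(\epsilon,\lambda)$-topology---weaker than continuity in the locally $L^0$-convex topology used by Kupper and Vogelpoth---is genuinely exploited.
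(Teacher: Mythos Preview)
Your argument is correct, but it follows a genuinely different route from the paper's. After the same locality-based reduction to the case $f(Y_1)\le\xi\le f(Y_2)$, the paper does \emph{not} bisect: it sets $\eta=\bigwedge\mathcal G$ with $\mathcal G=\{Y\in[Y_1,Y_2]:f(Y)\ge\xi\}$, observes that locality makes $\mathcal G$ closed under $\wedge$ (hence dually directed), extracts a nonincreasing sequence $W_n\searrow\eta$ from $\mathcal G$, and uses continuity in probability to conclude $f(\eta)\ge\xi$. The opposite inequality is then obtained by contradiction, perturbing $\eta$ downward by $1/n$ and showing the perturbed points cannot lie in $\mathcal G$. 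Your construction instead builds $\eta$ as a limit of dyadic first-passage times along the segment, evaluates $f(\eta_n)$ and $f(\eta_n')$ via finite locality on the partition $\{\tau_n=k/2^n\}$, and sandwiches $f(\eta)$ between them. The paper's approach is shorter and leans directly on the order-completeness of $L^0(\mathcal F,R)$; yours is more constructive, avoids the essential-infimum machinery, and makes the pointwise measurable-selection aspect explicit. Both proofs invoke $(\epsilon,\lambda)$-continuity at exactly the same moment---to push a sequence converging in probability through $f$ and then pass to an a.s.\ convergent subsequence---so neither gains an advantage there.

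One small point in your reduction: the monotone sub-case as you state it assumes $f(Y_1)\le\xi\le f(Y_2)$ on all of $\Omega$, not merely on $A$. To produce $\eta_A$ rigorously, either run the sub-case with the truncated target $\xi_A:=(\xi\vee f(Y_1))\wedge f(Y_2)$ (which agrees with $\xi$ on $A$ and satisfies the global bounds), or---as the paper does---apply the sub-case to the modified local function $\tilde I_A f$ with target $\tilde I_A\xi$. This is routine, but worth saying once.
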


 In the sequel of this paper all the $RN$ modules are always assumed to be endowed with  the $(\epsilon,\lambda)-$topology.

Given an element $\xi$ in $\bar{L}({\cal F},R)$ with a representative $\xi^{0}$, we use $[\xi>0]$ for the equivalence class of the set $\{\omega\in\Omega~|~\xi^{0}(\omega)>0\}$. Let $(S,\|\cdot\|)$ be an $RN$ space over $K$ with
base $(\Omega,{\cal F},P)$ and put $\xi=\vee\{\|x\|~|~x\in S\}$, then a representative of $H(S):=[\xi>0]$ is called a support of $S$.

Let $\widetilde{\cal F}$ denote the set of equivalence classes of elements in ${\cal F}$ and $A,B\in\widetilde{\cal F}$ with respective representatives $A_0$ and $B_0$, $A\subset B$ means $P(A_0\backslash B_0)=0$, $A\cup B$ and $A\cap B$ denote the equivalence classes of $A_0\cup B_0$ and $A_0\cap B_0$, respectively.  For simplifying notation we also use $I_A$ for the equivalence class of ${I}_{A_{0}}$.

First of all, let us recall the notion of modulus of random convexity as follows:

Let $(S,\|\cdot\|)$ be a complete $RN$ module over $K$ with base $(\Omega,{\cal F},P)$ such that $P(H(S))>0$. Define
\begin{equation}
\begin{split}
\delta_D(\epsilon)
=\bigwedge\left\{{I}_D-{I}_D\left\|\frac{x+y}{2}\right\|:x,y\in S(1)\textmd{ with } B_{xy}\supset D\textmd{ and
}{I}_D\|x-y\|\geq\epsilon{I}_D\right\}
\end{split} \tag{1.1}
\end{equation}
for any $D\in\widetilde{\cal F}$ with $D\subset H(S)$ and $P(D)>0$ and $\epsilon\in L^{0}_{+}$ such that
$0<\epsilon\leq 2$ on $D$, where\\
\indent $S(1)=\{x\in S:\|x\|=I_A$ for some $A\in\widetilde{\cal F}$ with $P(A)>0\}$,\\
\indent $A_x=[\|x\|>0], A_{xy}=A_{x}\cap A_{y}$ and $B_{xy}=A_{xy}\cap A_{x-y},\forall x,y\in S$.\\
It is known that $\delta_D(\epsilon)I_G=\delta_G(\epsilon)$ for any $G\in\widetilde{\cal F}$ with $G\subset D$. Then the mapping $\delta_{H(S)}(\cdot):\bar{\mathscr{E}}(S)\rightarrow L^{0}_{+}$ defined by (1.1) is called \emph{the modulus of random convexity} of $S$, where $\bar{\mathscr{E}}(S)=\{\epsilon\in L^{0}_{+}:0<\epsilon\leq 2
\textmd{ on }H(S)\}$.

When $(\Omega,{\cal F},P)$ is a trivial probability space, namely, ${\cal F}=\{\Omega,\emptyset\}$ and $P(\Omega)=1$, then $S$ becomes an ordinary Banach space and (1.1) degenerates to
$$\delta(\epsilon)=\inf\left\{1-\left\|\frac{x+y}{2}\right\|:x,y\in X,\|x\|=\|y\|=1\textmd{ and }\|x-y\|\geq\epsilon\right\}$$ for any $\epsilon\in R$ with $0<\epsilon\leq 2$, which is just the classical modulus of convexity of the Banach space $S$. It is well known from \cite{Mgginson} or \cite{Linden} that $\delta(\epsilon)$ has the following two useful expressions when $K=R$ and dim($S$)$\geq 2$:
\begin{equation}
\begin{split}
\delta(\epsilon)=&\inf\left\{1-\left\|\frac{x+y}{2}\right\|:x,y\in X,\|x\|=\|y\|=1\textmd{ and }\|x-y\|=\epsilon\right\}\\
=&\inf\left\{1-\left\|\frac{x+y}{2}\right\|:x,y\in X,\|x\|\leq 1,\|y\|\leq 1\textmd{ and }\|x-y\|\geq\epsilon\right\}.
\end{split} \tag{1.2}
\end{equation}

One may naturally ask if there exist such expressions for modulus of random convexity when the base space $(\Omega,{\cal F},P)$ of $S$ is not trivial. This problem involves a detailed discussion of the notions of $L^{0}-$independence and quasi-rank in real $RN$ modules, from which we know that for every complete $RN$ module $S$ there exists a unique $G(S)$ in $\widetilde{\cal F}$ with $G(S)\subset H(S)$ such that $\delta_{H(S)\backslash G(S)}(\epsilon)=I_{H(S)\backslash G(S)}$ for any $\epsilon\in\bar{\mathscr{E}}(S)$, and such that the quasi-rank of $S$ on $G(S)$ is not less than 2 when $P(G(S))>0$. Following is our second main result:

\begin{theorem}\label{thm:expr}
 Let $(S,\|\cdot\|)$ be a complete real $RN$ module with base $(\Omega,{\cal F}, P)$ and $P(G(S))>0$, define
$$\delta_D^{(1)}(\epsilon)
=\bigwedge\left\{{I}_D-{I}_D\left\|\frac{x+y}{2}\right\|:x, y\in S(1)\textmd{ with } B_{xy}\supset D\textmd{ and
}{I}_D\|x-y\|=\epsilon{I}_D\right\},$$
$$\delta_D^{(2)}(\epsilon)=\bigwedge\left\{{I}_D-{I}_D\left\|\frac{x+y}{2}\right\|:x, y\in U(1)\textmd{ with } B_{xy}\supset D\textmd{ and
}{I}_D\|x-y\|\geq\epsilon{I}_D\right\}$$
for any
$D\in\widetilde{\cal F}$ with $D\subset G(S)$ and $P(D)>0$ and $\epsilon\in L^{0}_{+}$ such that
$0<\epsilon\leq 2$ on $D$, where $U(1)=\{x\in S:\|x\|\leq 1\}$.
Then $\delta_D(\epsilon)=\delta^{(1)}_D(\epsilon)=\delta^{(2)}_D(\epsilon)$, where $\delta_D(\epsilon)$ is given by (1.1).
\end{theorem}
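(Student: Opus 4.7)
The plan is to establish both equalities $\delta_D(\epsilon)=\delta_D^{(1)}(\epsilon)$ and $\delta_D(\epsilon)=\delta_D^{(2)}(\epsilon)$ separately. The inequalities $\delta_D(\epsilon)\leq\delta_D^{(1)}(\epsilon)$ and $\delta_D^{(2)}(\epsilon)\leq\delta_D(\epsilon)$ are both immediate from set inclusions of the defining constraints (the equality $\|x-y\|=\epsilon$ is more restrictive than $\|x-y\|\geq\epsilon$, and $S(1)\subset U(1)$). The substantive work is to prove $\delta_D^{(1)}(\epsilon)\leq\delta_D(\epsilon)$ and $\delta_D(\epsilon)\leq\delta_D^{(2)}(\epsilon)$; in each case this amounts to producing, for a given admissible pair in the larger constraint set, an admissible pair in the smaller set whose midpoint random norm on $D$ is at least as large.

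For the first nontrivial inequality, given $x,y\in S(1)$ with $B_{xy}\supset D$ and $I_D\|x-y\|\geq\epsilon I_D$, I would reparameterize along a random arc on the unit random sphere: set $y(t):=((1-t)y+tx)/\|(1-t)y+tx\|$ for $t\in L^{0}({\cal F},R)$ with $0\leq t\leq 1$, handling the possibly degenerate locus on which $(1-t)y+tx$ vanishes by stratifying $D$ and using an auxiliary $L^0$-independent direction supplied by the quasi-rank $\geq 2$ hypothesis. The map $\phi(t):=\|x-y(t)\|$ is a continuous local function from $L^{0}({\cal F},R)$ to itself, with $\phi(0)=\|x-y\|$ and $\phi(1)=0$ on $D$, so $\epsilon I_D$ lies in the random interval between these values. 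Theorem~\ref{thm:IVT} applied to $\phi$ on the $L^{0}$-interval $[0,1]$ then yields a random scalar $t^{*}\in[0,1]$ with $I_D\phi(t^{*})=\epsilon I_D$. Taking $x':=x$ and $y':=y(t^{*})$ (suitably extended off $D$) gives a pair in $S(1)$ meeting the $\delta_D^{(1)}$ constraints; a direct estimate, using that $y(t)$ is a positive $L^{0}$-scalar multiple of a convex combination of $x$ and $y$, supplies the required midpoint comparison.

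For the second nontrivial inequality, I would first use a convex-combination distance reduction: from $(x,y)\in U(1)\times U(1)$ with $I_D\|x-y\|\geq\epsilon I_D$, set $s:=I_D(1-\epsilon/\|x-y\|)/2\in L^{0}_{+}$, $x_1:=(1-s)x+sy$, $y_1:=sx+(1-s)y$. Then $x_1+y_1=x+y$, $I_D\|x_1-y_1\|=\epsilon I_D$, and $x_1,y_1\in U(1)$, so the midpoint random norm is preserved exactly while the distance is brought down to $\epsilon$. It remains to lift $(x_1,y_1)$ to a pair in $S(1)\times S(1)$ on $D$ without decreasing this midpoint norm; here I would again use the quasi-rank $\geq 2$ assumption on $G(S)\supset D$ to supply auxiliary $L^{0}$-independent directions, and apply Theorem~\ref{thm:IVT} to determine $L^{0}$-scalars that normalize $\|x_1\|$ and $\|y_1\|$ to $I_D$ on $D$.

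The main obstacle is precisely this final normalization step: Theorem~\ref{thm:IVT} is a one-parameter statement, so coupling the two norm constraints $\|x_1\|=I_D$ and $\|y_1\|=I_D$ simultaneously demands either a sequential argument that uses the random stratification of $D$ to decouple the two normalizations, or a joint extension of $(x_1,y_1)$ along the $L^{0}$-plane spanned by the line through $x_1,y_1$ and the direction of $x_1+y_1$. Both approaches lean essentially on the random module structure of $S$ and on the quasi-rank $\geq 2$ hypothesis, which is exactly the reason the set $G(S)$ appears in the theorem's statement.
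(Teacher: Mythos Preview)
Your overall strategy is right, and your use of Theorem~\ref{thm:IVT} to hit a prescribed $L^0$-value of a distance function is exactly how the paper proceeds in its Lemma~\ref{lem:L3.1}. But two steps in your sketch are genuine gaps, and they are in fact the heart of the argument.

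In Part 1, after locating $t^*$ with $I_D\|x-y(t^*)\|=\epsilon I_D$, you assert that ``a direct estimate'' gives $I_D\|x+y(t^*)\|\geq I_D\|x+y\|$. This is not direct. Writing $y(t^*)=\lambda\big((1-t^*)y+t^*x\big)$ with $\lambda\geq 1$, the quantity $\|x+y(t^*)\|-\|x+y\|$ has no sign in general without further input; the normalized chord need not be monotone for the midpoint norm, and the positivity of the coefficients alone does not yield a comparison. The paper does \emph{not} obtain this comparison from the path: it first applies the Hahn--Banach theorem (Theorem~\ref{thm:HahnB}) to pick $x^*\in S^*$ supporting $I_D\frac{x+y}{2}$, splits $D$ according to the sign of $x^*(x)-x^*(y)$, and then invokes a separate technical result (Proposition~\ref{prop:hard}) to produce $u,v\in S(1)$ with $u-v$ equal to the rescaled difference and $\|u+v\|\geq\|x_1+y_1\|$. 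The midpoint inequality $\|x_1+y_1\|\geq\|x+y\|$ then falls out of the functional $x^*$. You never mention Hahn--Banach, and without it (or an equivalent device) this step does not close.

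In Part 2 you correctly isolate the obstacle---simultaneously normalizing two elements of $U(1)$ to $S(1)$ without losing the midpoint---but you do not resolve it, and the convex-combination trick you start with (reducing $\|x-y\|$ to $\epsilon$) is unnecessary since $\delta_D$ only asks for $\|u-v\|\geq\epsilon$. The paper handles this more economically: it rescales by the larger of $\|x\|,\|y\|$ so that one vector lands on $S(1)$ and the other stays in $U(1)$ with all inequalities preserved, and then applies Proposition~\ref{prop:hard} once. That proposition is precisely the ``lift one vector from the ball to the sphere, keeping the difference and not shrinking the sum'' statement you are missing; its proof combines the IVT-based arc construction (Lemma~\ref{lem:L3.1}) with another Hahn--Banach argument and a careful $L^0$-stratification of $A_{xy}$ according to the $L^0$-independent part of $x,y$. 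Both of your nontrivial inequalities ultimately rest on this single proposition, which your outline does not contain.
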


Since an $RN$ module possesses the more complicated stratification structure than a normed space, those classical proofs of (1.2) in \cite{Mgginson,Linden} do not apply to our random setting. Fortunately, Yang and Zuo recently proposed a nice new proof of (1.2) in \cite{YZ} by skillfully utilizing the classical intermediate value theorem for continuous real-valued functions and the classical Hahn-Banach theorem. Since the Hahn-Banach theorem for a.s. bounded random linear functionals is available, which is combined with Theorem \ref{thm:IVT} so that we can complete the proof of Theorem \ref{thm:expr}.

In particular, in this paper we find that the essence of Condition $(\triangle)$:
$$\{\epsilon{I}_{H(S)}:\epsilon\in R \mbox{~and~}0<\epsilon\leq 2\}\subset\{{I}_{H(S)}\|x-y\|:x,y\in S\textmd{ and }\|x\|=\|y\|={I}_{H(S)}\}$$
introduced in \cite[Section 4]{Guo-Zeng} is that the $RN$ module $(S,\|\cdot\|)$ in consideration has quasi-rank not less than 2. This fact is not only useful in the proof of Theorem \ref{thm:expr} but also leads us directly to the third main result below:

\begin{theorem}\label{thm:con}
A complete random normed module $(S,\|\cdot\|)$ is random uniformly convex iff the Banach space
$(L^{p}(S),\|\cdot\|_p)$ derived from $S$ is uniformly convex for each fixed positive number $p$ such that $1<p<+\infty$.
\end{theorem}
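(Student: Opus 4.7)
Fix $p\in(1,+\infty)$ and write $L^{p}(S):=\{x\in S:\|x\|\in L^{p}(\Omega,\mathcal{F},P)\}$ for the Banach space equipped with $\|x\|_{p}:=(E[\|x\|^{p}])^{1/p}$. My plan is to prove the two implications separately, both by translating between an $L^{0}$-level failure of random convexity and a classical $L^{p}$-level failure.

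For ``$L^{p}(S)$ uniformly convex $\Rightarrow S$ random uniformly convex'' I argue by contrapositive. Failure of random uniform convexity, after localization, produces a real $\epsilon_{0}\in(0,2]$ and $A\in\widetilde{\mathcal{F}}$ with $A\subset H(S)$, $P(A)>0$, such that $\delta_{A}(\epsilon_{0}I_{A})=0$. Realizing this $L^{0}$-infimum, pick $\{x_{n}\},\{y_{n}\}\subset S(1)$ with $B_{x_{n}y_{n}}\supset A$, $I_{A}\|x_{n}-y_{n}\|\geq\epsilon_{0}I_{A}$ and $I_{A}\|(x_{n}+y_{n})/2\|\to I_{A}$ in probability. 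Setting $\widetilde u_{n}:=P(A)^{-1/p}I_{A}x_{n}$ and $\widetilde v_{n}:=P(A)^{-1/p}I_{A}y_{n}$, direct computation yields $\|\widetilde u_{n}\|_{p}=\|\widetilde v_{n}\|_{p}=1$ and $\|\widetilde u_{n}-\widetilde v_{n}\|_{p}\geq\epsilon_{0}$; bounded convergence (since $\|(x_{n}+y_{n})/2\|\leq 1$) gives $\|(\widetilde u_{n}+\widetilde v_{n})/2\|_{p}\to 1$, contradicting uniform convexity of $L^{p}(S)$.

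The converse direction is the substantive one, and I plan to import Day's classical argument in randomized form. Given $u,v\in L^{p}(S)$ with $\|u\|_{p}=\|v\|_{p}=1$ and $\|u-v\|_{p}\geq\epsilon$, set $\alpha:=\|u\|\vee\|v\|$ and, on $\{\alpha>0\}$, form $\widetilde u:=\alpha^{-1}u,\widetilde v:=\alpha^{-1}v\in U(1)$ using the $L^{0}$-module structure. The expression $\delta^{(2)}$ of Theorem~\ref{thm:expr}, formulated on $U(1)$ rather than $S(1)$, then yields for any $D\subset B_{\widetilde u\widetilde v}\cap G(S)$ the pointwise random inequality
$$I_{D}\|(u+v)/2\|\leq I_{D}\alpha\bigl(1-\delta^{(2)}_{D}(I_{D}\|u-v\|/\alpha)\bigr).$$
Choosing $\eta:=\epsilon/4$ and $A:=\{\|u-v\|\geq\eta\alpha\}\cap G(S)$, a direct estimate of $\int_{A^{c}}\|u-v\|^{p}\,dP$ combined with $\|u-v\|_{p}\geq\epsilon$ forces $\int_{A}\alpha^{p}\,dP\geq c(\epsilon,p)>0$. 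Writing $\Delta:=\delta_{H(S)}(\eta I_{H(S)})\in L^{0}_{+}$, random uniform convexity gives $\Delta>0$ a.s.\ on $H(S)$ and the localization identity $\delta_{A}(\eta I_{A})=I_{A}\Delta$; refining to $A':=A\cap\{\Delta\geq\tau\}$ for a small enough real $\tau>0$ preserves the mass bound, and integrating the pointwise inequality on $A'$ combined with the elementary estimate $\|(u+v)/2\|^{p}\leq(\|u\|^{p}+\|v\|^{p})/2$ off $A'$ delivers $\|(u+v)/2\|_{p}^{p}\leq 1-\delta(\epsilon,p)$ for some real $\delta(\epsilon,p)>0$.

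The main obstacle is the uniform choice of $\tau$ across all pairs $u,v$ satisfying the hypotheses: a priori $\int_{A\cap\{\Delta<\tau\}}\alpha^{p}\,dP$ may fail to vanish uniformly as $\tau\downarrow 0$, owing to possible concentration of $\alpha^{p}$ on the bad set $\{\Delta<\tau\}$. Surmounting this combines the $L^{p}$-bound $\|\alpha\|_{p}^{p}\leq 2$ with $P(\{\Delta<\tau\})\downarrow 0$ (from the $L^{0}$-positivity of $\Delta$ supplied by random uniform convexity). The essence of Condition $(\triangle)$ identified in the paper, namely quasi-rank $\geq 2$ on $G(S)$, fixes $G(S)$ as the correct stratum on which this analysis is non-degenerate. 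Finally, Theorem~\ref{thm:IVT} enters only indirectly, through its role as the engine driving Theorem~\ref{thm:expr}, whose $\delta^{(2)}$-form is indispensable for the module-homogeneity reduction because generic $L^{p}$-vectors do not admit an a.s.\ unit normalization.
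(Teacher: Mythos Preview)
Your sufficiency direction (contrapositive from failure of random uniform convexity to failure of uniform convexity of $L^{p}(S)$) is essentially what the paper invokes from \cite[Theorem~4.3]{Guo-Zeng}, so that part is fine.

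The necessity direction, however, has a real gap, and it is precisely the ``main obstacle'' you flag. Your proposed fix---combining $\|\alpha\|_{p}^{p}\leq 2$ with $P(\{\Delta<\tau\})\downarrow 0$---does not control $\int_{\{\Delta<\tau\}}\alpha^{p}\,dP$ uniformly in $(u,v)$. An $L^{1}$-bound on $\alpha^{p}$ together with smallness of the measure of a set gives nothing without uniform integrability, and the family $\{\alpha^{p}:\|u\|_{p}=\|v\|_{p}=1\}$ is not uniformly integrable: for any fixed set $B=\{\Delta<\tau\}$ of positive probability one can take $u,v$ supported on $B$ with $\|u\|_{p}=\|v\|_{p}=1$, so that $A'\subset A\subset B\cap\{\Delta\geq\tau\}=\emptyset$ and your estimate yields no gain at all. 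There is also a second, more structural problem: even granting a uniform $\tau$, the bound $I_{A'}\|(u+v)/2\|^{p}\leq (1-\tau)^{p}\alpha^{p}$ does not combine with the trivial bound $(\|u\|^{p}+\|v\|^{p})/2$ off $A'$ to give $\|(u+v)/2\|_{p}^{p}\leq 1-\delta$, because $(1-\tau)^{p}\alpha^{p}$ can exceed $(\|u\|^{p}+\|v\|^{p})/2$ pointwise whenever $\|u\|$ and $\|v\|$ are far apart (take $\|v\|\ll\|u\|=\alpha$, so $(\|u\|^{p}+\|v\|^{p})/2\approx\alpha^{p}/2$). A genuine Day--Clarkson argument must split further on the ratio $t=(\|u\|\wedge\|v\|)/\alpha$ and use the elementary bound when $t$ is small.

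The paper avoids both difficulties by moving the passage from an $L^{0}$-valued modulus to a \emph{real} constant down to the pointwise level, before any integration. This is Proposition~\ref{prop:key}: for each real $\epsilon\in(0,2]$ there is a real $\delta_{p}(\epsilon)\in(0,1)$ such that $I_{D}\|x-y\|\geq\epsilon I_{D}(\|x\|\vee\|y\|)$ forces $I_{D}\|(x+y)/2\|^{p}\leq(1-\delta_{p}(\epsilon))\,I_{D}(\|x\|^{p}+\|y\|^{p})/2$. Once this real pointwise Clarkson inequality is in hand, the $L^{p}$-uniform-convexity estimate is obtained by a single integration exactly as in the classical case, with no uniformity issue. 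The proof of Proposition~\ref{prop:key} follows \cite[Proposition~4.5]{Guo-Zeng} but replaces Condition~$(\triangle)$ by the $G(S)$-decomposition: on $H(S)\setminus G(S)$ the modulus is identically $1$ (Corollary~\ref{cor:mod}), while on $G(S)$ Lemma~\ref{lem:L3.3} supplies the bound $\delta_{G(S)}(\epsilon)\leq(\epsilon/2)I_{G(S)}$ needed for the $t$-splitting. Thus the intermediate value theorem enters through Lemma~\ref{lem:L3.2}~$\to$~Lemma~\ref{lem:L3.3}~$\to$~Proposition~\ref{prop:key}, not through the $\delta^{(2)}$-expression of Theorem~\ref{thm:expr} as you suggest.
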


Theorem \ref{thm:con} was mentioned in \cite{Guo-Zeng} without a complete proof, which both implies \cite[Theorem 4.3]{Guo-Zeng} and improves \cite[Theorem 4.4]{Guo-Zeng} in that Condition ($\triangle$) in \cite[Theorem 4.4]{Guo-Zeng} has been removed.

The remainder of this paper is organized as follows: Section 2 is devoted to the detailed discussion of the notions of $L^{0}-$independence and quasi-rank in real $RN$ modules; Section 3 will prove the three main results above.

\section{Preliminaries}

First, Lemma \ref{lem:RNprop} below summarizes some basic and known facts on random conjugate spaces of $RN$ modules, whose proofs and the notion of random conjugate spaces can be found in \cite{Guo-Zeng,Guotx-compre,Guotx-extension}.

\begin{lemma}\label{lem:RNprop}
Let $(S,\|\cdot\|)$ be an $RN$ module over $K$ with base $(\Omega,{\cal F},P)$ and $(S^{\ast},\|\cdot\|^{\ast})$ its random conjugate space, then the following hold:\\
\indent(1) For any $\{x_n,n\in N\}\subset S$ and $x\in S$, $x_n\rightarrow x(n\rightarrow\infty)\Leftrightarrow\|x_n-x\|\xrightarrow{P}0(n\rightarrow\infty)$ (convergence in probability);\\
\indent(2) $S$ is a topological module
over the topological algebra $L^{0}({\cal F},K)$, namely the module
multiplication $\cdot:L^{0}({\cal F},K)\times S\rightarrow S$ is
jointly continuous;\\
\indent(3) A mapping $f:S\rightarrow L^{0}({\cal F},K)$ is continuous iff $f(x_n)\xrightarrow{P}f(x)(n\rightarrow\infty)$ for any $\{x_n,n\in N\}\subset S$ and $x\in S$ such that $x_n\rightarrow x(n\rightarrow\infty)$;\\
\indent(4) $|f(x)|\leq \|f\|^{\ast}\cdot \|x\|,\forall f\in S^{\ast}$ and $x\in S$;\\
\indent(5) $f\in S^{\ast}$ iff $f$ is a continuous module homomorphism from $S$
to $L^{0}({\cal F},K)$, i.e., $f$ is a continuous mapping from $S$ to
$L^{0}({\cal F},K)$ and $f(\xi\cdot x+\eta\cdot y)=\xi\cdot
f(x)+\eta\cdot f(y),\forall \xi, \eta\in L^{0}({\cal F},K)$ and
$x, y\in S$;\\
\indent(6) There exists a sequence $\{x_n, n\in N\}$ in the
random unit sphere $S(1)$ such that $\{\|x_n\|, n\in N\}$ converges
to ${I}_{H(S)}$ in a nondecreasing way. Further, if $(S,\|\cdot\|)$
is complete then there exists an element $x$ in $S(1)$ such that
$\|x\|={I}_{H(S)}$;\\
\indent(7) Let $(S^{\ast\ast},\|\cdot\|^{\ast\ast})$ be the random random conjugate space of $(S^{\ast},\|\cdot\|^{\ast})$ and $J: S\rightarrow S^{\ast\ast}$ a mapping defined by $J(x)(f)=f(x),\forall f\in
S^{\ast}$ and $x\in S$, then $\|J(x)\|^{\ast\ast}=\|x\|,\forall x\in S$. Such a
mapping $J$ is called {\it the canonical
embedding mapping} from $S$ to $S^{\ast\ast}$;\\
\indent(8) $\|f\|^{*}= \vee\{|f(x)|:x\in S(1)\}$ for any $f\in S^{\ast}$, further, if $(S,\|\cdot\|)$ is a real $RN$ module, then $\|f\|^{*}= \vee\{f(x):x\in S(1)\}$, so that $\|x\|=\|J(x)\|^{\ast\ast}=\vee\{f(x):f\in S^{\ast}(1)\}$.
\end{lemma}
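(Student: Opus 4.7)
Since Lemma~\ref{lem:RNprop} collects several known facts from \cite{Guo-Zeng,Guotx-compre,Guotx-extension}, my plan is to organize the eight items around three ingredients: the definition of the $(\epsilon,\lambda)$-topology, the definition of the random conjugate norm, and a module-theoretic Hahn--Banach extension theorem that I will invoke as a black box.

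First I would dispatch (1) by unfolding Definition~\ref{def:weaktop}: membership in $N(\epsilon,\lambda)$ translates verbatim into the probabilistic inequality defining convergence in probability, so $x_n\to x\Leftrightarrow\|x_n-x\|\xrightarrow{P}0$. Metrizability of the $(\epsilon,\lambda)$-topology (noted in Definition~\ref{def:weaktop}) reduces continuity to sequential continuity, which together with (1) gives (3). For (2), the estimate
$$\|\xi_nx_n-\xi x\|\leq|\xi_n-\xi|\,\|x_n\|+|\xi|\,\|x_n-x\|,$$
combined with the facts that $\{\|x_n\|\}$ is bounded in probability (being convergent) and that $L^{0}({\cal F},K)$ is a topological algebra under convergence in probability (Proposition~\ref{prop:weaktop}), yields the required joint continuity.

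Next I would handle (4) and (5), which are essentially bookkeeping: the random conjugate norm is defined as $\|f\|^{\ast}=\bigwedge\{\xi\in L^{0}_{+}:|f(x)|\leq\xi\|x\|,\ \forall x\in S\}$, whence (4) is immediate by $L^{0}$-homogeneity, and (5) follows by combining the definition of $S^{\ast}$ with the sequential continuity criterion from (3). For (6), I would use that $\{\|x\|:x\in S\}$ is directed upward together with the fact, recalled in the Introduction, that every directed subset of $\bar{L}^{0}({\cal F},R)$ contains a nondecreasing sequence with the same supremum; normalizing the resulting vectors into $S(1)$ via the module action supplies a sequence $\|x_n\|\nearrow I_{H(S)}$. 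In the complete case, a countable gluing argument that partitions $H(S)$ into a countable disjoint union of supports of random unit vectors and sums the pieces (using completeness of $S$) produces a single $x\in S(1)$ with $\|x\|=I_{H(S)}$.

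Finally, (7) and (8) rest on the Hahn--Banach extension theorem for a.s.\ bounded random linear functionals, which the paper itself invokes in the discussion preceding Theorem~\ref{thm:expr}: given $x\in S$ there exists $f\in S^{\ast}$ with $\|f\|^{\ast}\leq I_{H(S)}$ and $f(x)=\|x\|$. Granted this, $\|J(x)\|^{\ast\ast}\leq\|x\|$ is immediate from (4), and the reverse inequality follows by evaluating $J(x)$ at the Hahn--Banach functional; (8) is obtained in the same spirit, with the real-case refinement $\|f\|^{\ast}=\vee\{f(x):x\in S(1)\}$ coming from the sign normalization available in the real Hahn--Banach theorem, which then feeds back via $J$ to give the second identity in (8). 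The main technical obstacle I anticipate is the complete-case half of (6), where the countable gluing must be performed carefully so that the glued vector lies in $S(1)$ and attains $I_{H(S)}$; everything else is either a definitional unpacking or a direct application of the random Hahn--Banach tool.
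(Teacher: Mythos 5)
The paper does not actually prove Lemma~\ref{lem:RNprop}: it presents the eight items as known facts and refers the reader to \cite{Guo-Zeng,Guotx-compre,Guotx-extension} for their proofs, so there is no in-paper argument to compare yours against. Judged on its own terms, your sketch follows the standard route taken in those references and is essentially sound: (1) and (3) really are definitional unpackings of $N(\epsilon,\lambda)$ plus metrizability; the estimate $\|\xi_nx_n-\xi x\|\leq|\xi_n-\xi|\,\|x_n\|+|\xi|\,\|x_n-x\|$ together with boundedness in probability of $\{\|x_n\|\}$ gives (2); upward directedness of $\{\|x\|:x\in S\}$ plus normalization by $I_{A_x}\|x\|^{-1}$ gives the first half of (6), and the disjointification $E_1=A_1$, $E_n=A_n\setminus A_{n-1}$ with $x=\sum_nI_{E_n}x_n$ (convergent by completeness) gives the second half; and (7)--(8) reduce correctly to Theorem~\ref{thm:HahnB}, with the sign trick $y=I_{[f(x)\geq0]}x-I_{[f(x)<0]}x$ handling the real-case refinement.

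The one place where you genuinely underestimate the work is item (5). The implication ``$f$ a.s.\ bounded $\Rightarrow f$ continuous'' does follow from (3) and (4) as you indicate, but the converse --- that a \emph{continuous} module homomorphism is automatically a.s.\ bounded, i.e.\ that $\vee\{|f(x)|:\|x\|\leq1\}$ lies in $L^{0}_{+}$ rather than merely in $\bar{L}^{0}_{+}$ --- is the only nontrivial point in the whole lemma apart from Hahn--Banach, and it is not a consequence of ``the definition of $S^{\ast}$ plus sequential continuity.'' It needs a localization argument: if the supremum were $+\infty$ on some $A\in{\cal F}$ with $P(A)>0$, one uses the locality $f(I_Bx)=I_Bf(x)$ (which follows from the module homomorphism property) to manufacture a sequence $y_n$ with $\|y_n\|\xrightarrow{P}0$ but $|f(y_n)|$ bounded away from $0$ in probability on $A$, contradicting continuity. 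You should either supply this gliding-hump step or cite it explicitly; as written, your proposal only covers the easy direction of (5). A smaller omission of the same flavor occurs in (8), where passing from the supremum over the unit ball to the supremum over $S(1)$ uses the normalization $I_{A_x}\|x\|^{-1}x$ and the identity $x=I_{A_x}x$; this is routine but worth a sentence.
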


The Hahn-Banach theorem---Theorem \ref{thm:HahnB} below for a.s. bounded random linear functionals plays an important role in the proof of Theorem \ref{thm:expr}.

\begin{theorem}[Guo \cite{Guotx-compre,Guotx-extension}]\label{thm:HahnB}
Let $(S,\|\cdot\|)$ be an $RN$ space over $K$ with base $(\Omega,{\cal
F},P)$, $M\subset S$ a linear subspace, and $f:M\rightarrow
L^{0}({\cal F},K)$ an a.s. bounded random linear functional on $M$.
Then there exists an $F\in S^{\ast}$ such that $F(x)=f(x),\forall x\in M$ and $\|F\|^{\ast}=\|f\|^{\ast}$.
As a consequence, for any $x\in S$, there exists $g\in S^{\ast}$ such that $g(x)=\|x\|$ and $\|g\|^{\ast}=I_{A_x}$, where $A_x=[\|x\|>0]$.
\end{theorem}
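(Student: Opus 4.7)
The proof proposal follows the classical template of Hahn--Banach: establish a one-step $L^{0}$-module extension, then invoke Zorn's lemma. First I would reduce the complex case $K=C$ to the real case via the Bohnenblust--Sobczyk decomposition $f=f_{1}-if_{1}(i\cdot)$ with $f_{1}=\operatorname{Re}f$, since the random norm behaves exactly as in the classical setup under this decomposition. So I focus on $K=R$.

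Consider the partially ordered set ${\cal P}$ of pairs $(N,g)$, where $N$ is an $L^{0}({\cal F},R)$-submodule of $S$ containing $M$ and $g:N\rightarrow L^{0}({\cal F},R)$ is an $L^{0}$-linear extension of $f$ with $|g(y)|\leq\|f\|^{*}\cdot\|y\|$ for every $y\in N$; order by extension. Every chain has an upper bound via union, so Zorn's lemma yields a maximal element $(N_{0},g_{0})$. The crux is to show $N_{0}=S$. Suppose not, pick $x_{0}\in S\setminus N_{0}$; for $y_{1},y_{2}\in N_{0}$ the chain
\[
g_{0}(y_{1})+g_{0}(y_{2})=g_{0}(y_{1}+y_{2})\leq\|f\|^{*}\|y_{1}+y_{2}\|\leq\|f\|^{*}(\|y_{1}-x_{0}\|+\|y_{2}+x_{0}\|)
\]
rearranges to $g_{0}(y_{1})-\|f\|^{*}\|y_{1}-x_{0}\|\leq\|f\|^{*}\|y_{2}+x_{0}\|-g_{0}(y_{2})$. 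Using the lattice completeness of $L^{0}({\cal F},R)$ recalled in the introduction, the supremum $\alpha$ of the left family and the infimum $\beta$ of the right family exist in $L^{0}({\cal F},R)$ and satisfy $\alpha\leq\beta$. Picking any $c\in[\alpha,\beta]$, define $G(y+\xi x_{0}):=g_{0}(y)+\xi c$.

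To verify $|G(y+\xi x_{0})|\leq\|f\|^{*}\|y+\xi x_{0}\|$ on all of $\Omega$, I would stratify via the indicators $I_{[\xi>0]}$, $I_{[\xi<0]}$, $I_{[\xi=0]}$: on $[\xi>0]$ multiply the defining bound $c\leq\|f\|^{*}\|y/\xi+x_{0}\|-g_{0}(y/\xi)$ by $\xi$ and invoke the locality of the random norm and of $g_{0}$; treat $[\xi<0]$ symmetrically by replacing $y$ with $-y$ and using the lower half of the interval; on $[\xi=0]$ the bound descends from $g_{0}$. A second subtlety is the partial-dependence set $B\in\widetilde{\cal F}$ maximal with $I_{B}x_{0}\in N_{0}$: on $B$ any $L^{0}$-linear extension value at $x_{0}$ is forced to equal $g_{0}(I_{B}x_{0})$, so I would take $I_{B}c=g_{0}(I_{B}x_{0})$ and choose $I_{B^{c}}c$ inside the interval obtained by repeating the convexity argument on $B^{c}$ (which is nonempty by the same inequality restricted there). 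The resulting $G\in{\cal P}$ contradicts maximality, giving $N_{0}=S$ and the desired $F\in S^{*}$. For the consequence, given $x\in S$ set $M:=L^{0}({\cal F},K)\cdot x$ and define $f_{0}(\xi x):=\xi\|x\|$; this is well-defined since $\xi x=\theta$ forces $\xi I_{A_{x}}=0$, and $|f_{0}(\xi x)|=\|\xi x\|$ yields $\|f_{0}\|^{*}=I_{A_{x}}$, so the extension theorem supplies $g\in S^{*}$ with $g(x)=\|x\|$ and $\|g\|^{*}=I_{A_{x}}$.

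The chief obstacle is precisely the stratified verification in the one-step extension: the classical argument divides by the scalar $\xi$ to normalize, but in the $L^{0}$-module setting $\xi$ may vanish on a $P$-positive subset of $\Omega$. The delicate piece is correctly localizing via $I_{[\xi>0]}$, $I_{[\xi<0]}$, $I_{[\xi=0]}$, invoking the locality of $\|\cdot\|$ and the $L^{0}$-linearity of $g_{0}$, and simultaneously bookkeeping the partial $L^{0}$-dependence of $x_{0}$ on $N_{0}$ encoded by the set $B$.
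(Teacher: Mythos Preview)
The paper does not itself prove Theorem~\ref{thm:HahnB}; it is quoted from Guo's earlier work \cite{Guotx-compre,Guotx-extension} and used as a tool. So there is no in-paper argument to compare against, only the standard proof in those references, which is indeed the Hahn--Banach template you describe: dominate by the $L^{0}$-valued sublinear functional $p(x)=\|f\|^{\ast}\|x\|$, use the Dedekind completeness of $L^{0}({\cal F},R)$ for the one-step extension, and apply Zorn's lemma.

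There is, however, a genuine mismatch between your write-up and the hypothesis. The theorem is stated for an $RN$ \emph{space} $(S,\|\cdot\|)$, not an $RN$ module: $S$ is only assumed to be a $K$-linear space, $M$ a $K$-linear subspace, and $f$ a $K$-linear map. You take $N$ to be an $L^{0}({\cal F},R)$-submodule, write the one-step extension with coefficients $\xi\in L^{0}({\cal F},R)$, stratify over $[\xi>0]$, $[\xi<0]$, $[\xi=0]$, and worry about the partial-dependence set $B$; in the corollary you take $M=L^{0}({\cal F},K)\cdot x$. None of this is available in a bare $RN$ space. The correct (and simpler) argument works with $R$-linear subspaces $N\supset M$ and real scalars $t\in R$: since $t$ is a number, division by $t\neq 0$ is unproblematic, no stratification is needed, and the partial-dependence issue evaporates because $x_{0}\notin N_{0}$ already forces $tx_{0}\notin N_{0}$ for every nonzero real $t$. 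Likewise, for the consequence one takes $M=Kx$ and $f_{0}(tx)=t\|x\|$ for $t\in K$. Your outline is the right skeleton, but as written it assumes structure the theorem does not provide and thereby introduces complications that are both unnecessary and, strictly speaking, not licensed by the hypotheses.
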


In the sequel, every $RN$ module $(S,\|\cdot\|)$ is assumed to have nontrivial support, namely $P(H(S))>0$.
The notions of $L^{0}-$independence and quasi-rank essentially come from \cite{Guo-Peng}.

\begin{definition}\label{def:nizhi}
 Let $(S,\|\cdot\|)$ be a real $RN$
module with base $(\Omega,{\cal F},P)$ and $D\in\widetilde{\cal F}$ such that $D\subset H(S)$ and $P(D)>0$.\\
\indent (1) For any $x,y\in S$ and $F\in\widetilde{\cal F}$, $x$ and $y$ are called {\it $L^{0}-$independent on $F$} if $\xi I_F=\eta I_F=0$ whenever $\xi,\eta\in L^{0}({\cal F},R)$ such that $\xi I_Fx+\eta I_Fy=\theta$;\\
\indent (2) If there exist $x,y\in S$ such that $x$ and $y$ are $L^{0}-$independent on $D$, then $S$ is said to {\it have quasi-rank not less than 2 on $D$} (briefly, $\textmd{Rank}_D(S)\geq 2$), otherwise $S$ is said to {\it have quasi-rank strictly less than 2 on $D$} (briefly, $\textmd{Rank}_D(S)< 2$). In particular, when $\textmd{Rank}_{H(S)}(S)\geq 2$, we simply say that $S$ has quasi-rank not less than 2, denoted by $\textmd{Rank}(S)\geq 2$.
\end{definition}

It should be mentioned that $L^{0}-$independence of three or more elements can be defined in the same manner as that of two elements. Let $(S,\|\cdot\|)$ and $D$ be the same as in Definition \ref{def:nizhi} and $x,y,z\in S$. It is easy to see that the independence of $x,y$ and $z$ on $D$ implies that of $x$ and $y$ on $D$. In addition, if $E\in\widetilde{\cal F}$ is such that $E\subset D$ and $P(E)>0$, then the $L^{0}-$independence of $x$ and $y$ on $D$ implies that on $E$, thus $\textmd{Rank}_D(S)\geq 2$ implies $\textmd{Rank}_E(S)\geq 2$.

\begin{proposition}\label{prop:duli}
Let $(S,\|\cdot\|)$ be an $RN$ module over $R$ with base $(\Omega,{\cal F},P), E\in\widetilde{\cal F}$ with $P(E)>0$ and $x,y\in S$. \\
\indent(1) If $P(A_{xy})>0$ and $x$ and $y$ are not $L^{0}-$independent on $A_{xy}$, then there exists a unique $F\in\widetilde{\cal F}$ with $F\subset A_{xy}$ and $P(F)>0$, and $\xi,\eta\in L^{0}({\cal F},R)$ with $F\subset [\xi\neq 0]\cap[\eta\neq 0]$ such that $\xi I_Fx+\eta I_F y=\theta$ and $x$ and $y$ are $L^{0}-$independent on $A_{xy}\backslash F$ whenever $P(A_{xy}\backslash F)>0$ (such $A_{xy}\backslash F$ is called {\it the $L^{0}-$independent part} of $x$ and $y$ no matter whether $P(A_{xy}\backslash F)>0$ or not). In addition, if $x$ and $y$ are $L^{0}-$independent on $A_{xy}$ their $L^{0}-$independent part is just the whole $A_{xy}$.\\
\indent(2) If $x$ and $y$ are $L^{0}-$independent on $E$, then $P(A_{xy})>0$ and $E\subset A_{xy}\backslash F$, where $F$ is the same as in (1).
\end{proposition}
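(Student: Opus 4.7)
The plan is to prove part (1) by an essential-supremum/exhaustion argument that identifies $F$ as the maximal subset of $A_{xy}$ on which $x$ and $y$ are proportional. First I would observe that if $\xi I_D x+\eta I_D y=\theta$ with $D\subset A_{xy}$ and $(\xi I_D,\eta I_D)\neq(0,0)$, then on the subset of $D$ where $\xi\neq 0$ one must also have $\eta\neq 0$ (otherwise $\xi I_D x=\theta$ forces $x=\theta$ there, contradicting $D\subset A_x$), and symmetrically. Consequently, $x$ and $y$ fail to be $L^{0}$-independent on a set $D\subset A_{xy}$ of positive measure iff there exist $D_0\subset D$ with $P(D_0)>0$ and $c\in L^{0}({\cal F},R)$ with $c\neq 0$ a.s.\ on $D_0$ satisfying $I_{D_0}x=c\,I_{D_0}y$. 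Let $\mathscr{D}$ denote the family of all such $D_0\subset A_{xy}$.

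Next I would build $F$ by exhaustion. Set $F=\vee\mathscr{D}\in\widetilde{\cal F}$, which can be realized as $F=\cup_n D_n$ for some countable $\{D_n\}\subset\mathscr{D}$ by completeness of $L^{0}({\cal F},R)$. Write $I_{D_n}x=c_n I_{D_n}y$, disjointify via $G_n=D_n\setminus\cup_{k<n}D_k$, and define $c=\sum_n c_n I_{G_n}\in L^{0}({\cal F},R)$; then $c\neq 0$ on $F$ and $I_F x=c\,I_F y$, so $F\in\mathscr{D}$. Taking $\xi=-I_F$ and $\eta=c\,I_F$ gives the desired pair satisfying $\xi I_F x+\eta I_F y=\theta$ and $F\subset[\xi\neq 0]\cap[\eta\neq 0]$. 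If $P(A_{xy}\setminus F)>0$ and $x,y$ were not $L^{0}$-independent on $A_{xy}\setminus F$, some $G\subset A_{xy}\setminus F$ with $P(G)>0$ would lie in $\mathscr{D}$, making $F\cup G\in\mathscr{D}$ strictly above $F$ and contradicting its maximality. Uniqueness is symmetric: any other candidate $F'$ belongs to $\mathscr{D}$, so $F'\subset F$, while the $L^{0}$-independence of $x,y$ on $A_{xy}\setminus F'$ combined with $F\setminus F'\subset A_{xy}\setminus F'$ (on which $x=cy$) forces $F\setminus F'=\emptyset$ in $\widetilde{\cal F}$.

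For part (2), I would first verify $E\subset A_{xy}$. Taking $\xi=I_{E\setminus A_x}$ and $\eta=0$, one has $\xi I_E x=I_{E\setminus A_x}x=\theta$, so $L^{0}$-independence on $E$ forces $I_{E\setminus A_x}=0$, i.e.\ $E\subset A_x$; by symmetry $E\subset A_y$, hence $E\subset A_{xy}$ and $P(A_{xy})>0$. To see $E\cap F=\emptyset$ in $\widetilde{\cal F}$, suppose $G:=E\cap F$ has $P(G)>0$. Since $I_F x=c\,I_F y$ with $c\neq 0$ on $F$, the pair $\xi=I_G$, $\eta=-c\,I_G$ satisfies $\xi I_E x+\eta I_E y=I_G x-c\,I_G y=\theta$ while $\xi I_E=I_G\neq 0$, contradicting $L^{0}$-independence on $E$.

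The main obstacle is the exhaustion step in part (1): the witnesses $c_n$ are defined only locally, and assembling them via the disjointification into a single $c\in L^{0}({\cal F},R)$ that is nonzero on all of $F$ (and is compatible with the dependency structure everywhere on $F$ simultaneously) is what makes the maximality of $F$ meaningful. Everything afterward reduces to direct verifications using the definitions of $A_x$, $A_{xy}$, and $L^{0}$-independence.
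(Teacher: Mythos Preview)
Your proposal is correct and follows essentially the same exhaustion argument as the paper: define the family of subsets of $A_{xy}$ on which a nontrivial linear relation holds, take its essential supremum $F$ via a countable sequence, glue the local coefficients by disjointification, and derive independence on $A_{xy}\setminus F$ from maximality. Your treatment of part~(2) (using $\xi=I_{E\setminus A_x}$, $\eta=0$ directly) is a bit cleaner than the paper's case split over $A_{xy}^c$, and you also spell out the uniqueness of $F$, which the paper leaves implicit.
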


\begin{proof} (1). Denote ${\cal B}=\{E\in\widetilde{\cal F}~|~E\subset A_{xy}\textmd{ such that }P(E)>0\textmd{ and }I_E\xi x+I_E\eta y=\theta$ for some $\xi,\eta\in L^{0}({\cal F},R)\textmd{ with }\xi,\eta\neq 0\textmd{ on }E\}$, where $\eta\neq 0$ on $E$ means that $E\subset [\eta\neq 0]$. Since $x,y$ are not $L^{0}-$independent on $A_{xy}$, there exist $\xi_0,\eta_0\in L^{0}({\cal F},R)$ with $\xi_0I_{A_{xy}}\neq 0$ or $\eta_0I_{A_{xy}}\neq 0$ such that $\xi_0I_{A_{xy}}x+\eta_0I_{A_{xy}}y=\theta$. One can see that $[\xi_0=0]\cap A_{xy}=[\eta_0=0]\cap A_{xy}$ by noticing that $\|x\|,\|y\|\neq 0$ on $A_{xy}$. Let $E_0=[\xi_0\neq 0]\cap A_{xy}$, then $E_0\in {\cal B}$, which shows that ${\cal B}$ is nonempty. Consequently, there exists a sequence $\{B_n,n\in N\}$ in ${\cal B}$ such that $\vee_{n\geq 1}I_{B_n}=\vee\{I_B:B\in{\cal B}\}$, namely, $I_{\cup_{n\geq 1}B_n}=\vee\{I_B:B\in{\cal B}\}$. It is clear that $B\subset \cup_{n\geq 1}B_n$ for any $B\in{\cal B}$. We will show that $F\triangleq \cup_{n\geq 1}B_n$ is just desired.

Take $\xi_n,\eta_n\in L^{0}({\cal F},R)$ such that $\xi_n,\eta_n\neq 0$ on $B_n$ and $I_{B_n}\xi_nx+I_{B_n}\eta_ny=\theta$ for any $n\in N$, and denote $E_1=B_1,E_n=B_n\backslash(\cup_{i=1}^{n-1}B_i),\forall n\geq 2$, then $\sum_{n=1}^{\infty}E_n=\cup_{n=1}^{\infty}B_n=F$. Since $P(\sum_{n=1}^{\infty}E_n)\leq 1$ and $L^{0}({\cal F},R)$ is complete, we know that $\{\sum_{n=1}^{\infty}\xi_nI_{E_n},k\in N\}$ converges in $P$ to some element $\xi$ in $L^{0}({\cal F},R)$. Further, $\xi I_{E_n}=\xi_n I_{E_n},\eta I_{E_n}=\eta_n I_{E_n},\forall n\in N$, so that $\xi,\eta\neq 0$ on $F$ and by the continuity of module multiplication, $\xi I_Fx+\eta I_Fy=\theta$.

On the other hand, $x$ and $y$ are $L^{0}-$independent on $A_{xy}\backslash F$ whenever $P(A_{xy}\backslash F)>0$. Otherwise there exists $D\in\widetilde{\cal F}$ with $D\subset A_{xy}\backslash F$ and $P(D)>0$ such that $D\in {\cal B}$, hence $D\subset F$, which is impossible.

(2). First, $E\subset A_{xy}$, otherwise $D\triangleq E\cap A_{xy}^c$ is such that $P(D)>0$. Notice that $A_{xy}^c=(A_x\backslash A_y)\cup(A_y\backslash A_x)\cup (A_x\cup A_y)^c$, we have $D=(D\cap(A_x\backslash A_y))\cup(D\cap(A_y\backslash A_x))\cup (D\cap(A_x\cup A_y)^c)$. Take $\xi_1=I_{A_y\backslash A_x}+I_{(A_x\cup A_y)^c}$ and $\eta_1=I_{A_x\backslash A_y}+I_{(A_x\cup A_y)^c}$, then clearly $\xi_1I_Dx+\eta_1I_Dy=\theta$ with $\xi_1I_D\neq 0$ or $\eta_1I_D\neq 0$, namely, $x$ and $y$ are not $L^{0}-$independent on $D$, so that they are not $L^{0}-$independent on $E$, which is a contradiction. Next, $E\subset F^c$, otherwise $G\triangleq E\cap F$ is such that $P(G)>0$, but by (1) $\xi I_Fx+\eta I_Fy=\theta$ and $\xi,\eta\neq 0$ on $F$, which implies that $\xi I_Gx+\eta I_Gy=\theta$. This is also a contradiction to the $L^{0}-$independence of $x$ and $y$ on $E$.
\end{proof}

\begin{proposition}\label{prop:G(S)}
Let $(S,\|\cdot\|)$ be a complete $RN$ module over $R$ with base $(\Omega,{\cal F},P)$ and $x_0$ as obtained in Lemma \ref{lem:RNprop}(6) such that $\|x_0\|=I_{H(S)}$. Then the following hold:\\
\indent (1) If $E\in\widetilde{\cal F}$ with $P(E)>0$ and $E\subset H(S)$ is such that $\textmd{Rank}_D(S)<2$ for any $D\in\widetilde{\cal F}$ with $P(D)>0$ and $D\subset E$, then for each $y\in S$ with $A_y\subset E$ there exists $\xi\in L^{0}({\cal F},R)$ such that $y=\xi x_0$;\\
\indent (2) If $\textmd{Rank}_E(S)\geq 2$ for some $E\in\widetilde{\cal F}$ with $E\subset H(S)$ and $P(E)>0$, then there exists a unique $G(S)\in\widetilde{\cal F}$ with $E\subset G(S)\subset H(S)$ and $P(G(S))>0$ such that $\textmd{Rank}_{G(S)}(S)\geq 2$ and $\textmd{Rank}_D(S)<2$ for any $D\in\widetilde{\cal F}$ with $D\subset H(S)\backslash G(S)$ and $P(D)>0$.
\end{proposition}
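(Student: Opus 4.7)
For part (1), since $\|x_0\|=I_{H(S)}$ forces $A_{x_0 y}=A_y\subset E$, the hypothesis on $E$ forbids $L^{0}$-independence of $x_0$ and $y$ on any $D\subset A_y$ of positive measure; in particular $x_0,y$ are not $L^{0}$-independent on $A_{x_0 y}$ (the case $P(A_y)=0$ being trivial with $\xi=0$). Proposition~\ref{prop:duli}(1) then supplies $F\subset A_y$ and $\xi_0,\eta_0\in L^{0}({\cal F},R)$ nonzero on $F$ with $\xi_0 I_F x_0+\eta_0 I_F y=\theta$; the independent part $A_y\backslash F$ must also be null by the same hypothesis, so $F=A_y$. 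Setting $\xi=-\xi_0/\eta_0$ on $F$ and $\xi=0$ off $F$, a short computation with $(RNM$-$1)$ gives $y=I_{A_y}y=I_F y=\xi x_0$.

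For (2), I will introduce the family
\[
{\cal B}=\{G\in\widetilde{\cal F}:G\subset H(S),\,\textmd{Rank}_D(S)<2\textmd{ for all }D\subset G\textmd{ with }P(D)>0\}.
\]
Using the paper's observation that $L^{0}$-independence on a set passes to any subset of positive measure, one checks that ${\cal B}$ is closed under countable unions; so a countable $P$-maximizing sequence in ${\cal B}$ has union $G^{\ast}$ which is the largest element of ${\cal B}$, and I set $G(S):=H(S)\backslash G^{\ast}$. The required inequality $\textmd{Rank}_D(S)<2$ on positive-measure $D\subset G^{\ast}$ is immediate; and if $P(E\cap G^{\ast})>0$, then $\textmd{Rank}_{E\cap G^{\ast}}(S)\geq 2$ (by the restriction property applied to the witnesses of $\textmd{Rank}_E(S)\geq 2$) contradicts $E\cap G^{\ast}\in{\cal B}$, so $E\subset G(S)$ and $P(G(S))>0$. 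Uniqueness is symmetric: two candidates $G_1,G_2$ force $G_i\backslash G_j$ to have both $\textmd{Rank}<2$ (as a subset of $H(S)\backslash G_j$) and $\textmd{Rank}\geq 2$ (as a subset of $G_i$), unless null.

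The main obstacle will be exhibiting a single pair $(x,y)\in S\times S$ witnessing $\textmd{Rank}_{G(S)}(S)\geq 2$. For this I consider the dual family
\[
{\cal C}=\{F\in\widetilde{\cal F}:F\subset G(S),\,\exists\,x,y\in S\textmd{ that are }L^{0}\textmd{-independent on }F\}
\]
and aim to prove it is closed under countable disjoint unions via a gluing argument exploiting completeness of $S$. Given pairwise disjoint $F_n$ with witnesses $x_n,y_n$, Proposition~\ref{prop:duli}(2) gives $F_n\subset A_{x_n}\cap A_{y_n}$, so a rescaling in $L^{0}_{++}$ produces witnesses with $\|x_n\|=\|y_n\|=I_{F_n}$ (hence $x_n=I_{F_n}x_n$). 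An induction based on $(RNM$-$1)$ and disjointness then yields $\|\sum_{n=1}^{N}x_n\|=I_{\cup_{n\leq N}F_n}$, so the partial sums are Cauchy (since $\sum_n P(F_n)<\infty$ drives $I_{\cup_{n>N}F_n}$ to $0$ in probability) and converge by completeness to some $x\in S$ with $I_{F_n}x=x_n$, and analogously $y$. Any relation $\xi I_F x+\eta I_F y=\theta$ with $F=\cup_n F_n$ localizes to each $F_n$ to give $\xi I_{F_n}=\eta I_{F_n}=0$, summing to $\xi I_F=\eta I_F=0$. A standard exhaustion in ${\cal C}$ then yields a maximal $F^{\ast}\in{\cal C}$; if $P(G(S)\backslash F^{\ast})>0$, maximality of $G^{\ast}$ in ${\cal B}$ forces a positive-measure subset of $G(S)\backslash F^{\ast}$ admitting an $L^{0}$-independent pair, contradicting the maximality of $F^{\ast}$. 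Hence $F^{\ast}=G(S)$.
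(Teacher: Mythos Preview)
Your proof is correct. Part (1) is essentially identical to the paper's argument. For part (2) you take a dual route: the paper works directly with the family ${\cal G}=\{D\subset H(S):\textmd{Rank}_D(S)\geq 2\}$, defines $G(S)$ as its essential supremum, and shows $G(S)\in{\cal G}$ by the very gluing construction you perform in ${\cal C}$ (disjointify a maximizing sequence, normalize witnesses to $\|x_n\|=\|y_n\|=I_{E_n}$, sum using completeness, and check $L^{0}$-independence coordinatewise). You instead first take the essential supremum $G^{\ast}$ of the ``thin'' family ${\cal B}$, set $G(S)=H(S)\backslash G^{\ast}$, and then run the same gluing argument inside ${\cal C}$ to exhibit a witnessing pair on all of $G(S)$. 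The paper's organization is a bit more economical—one essential-supremum step and one gluing step suffice, and the properties of $H(S)\backslash G(S)$ follow by maximality—whereas your approach makes uniqueness and the dichotomy more explicit at the cost of an extra exhaustion. Substantively, though, both proofs rest on the same completeness-based gluing of local witnesses.
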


\begin{proof} (1). Suppose that $y\in S$ with $A_y\subset E$ and $P(A_y)>0$, and that the $L^{0}-$independent part of $y$ and $x_0$ as determined by Proposition \ref{prop:duli}(1) is $A_y\backslash F$. If $P(A_y\backslash F)=0$, then $y=I_{A_y}y=\xi I_{A_y}x_0$ for some $\xi\in L^{0}({\cal F},R)$. If $P(A_y\backslash F)>0$, then $\textmd{Rank}_{A_y\backslash F}(S)\geq 2$, which is a contradiction.

(2). Denote ${\cal G}=\{D\in\widetilde{\cal F}~|~D\subset H(S), P(D)>0\textmd{ and }\textmd{Rank}_D(S)\geq 2\}$, then $E\in{\cal G}$ and there exists a sequence $\{G_n,n\in N\}$ in ${\cal G}$ such that $I_{\cup_{n=1}^{\infty}G_n}=\vee\{I_D:D\in {\cal G}\}$. We will verify that $G(S)\triangleq\cup_{n=1}^{\infty}G_n$ is just desired.

Let $E_1=G_1,E_n=G_n\backslash(\cup_{i=1}^{n-1}G_i),\forall n\geq 2$, $x_n$ and $y_n$ be two elements $L^{0}-$independent on $G_n$ for each $n\in N$. Then $\sum_{n=1}^{\infty}E_n=G(S)$, and $x\triangleq\sum_{n=1}^{\infty}I_{E_n}x_n$ and $y\triangleq\sum_{n=1}^{\infty}I_{E_n}y_n$ exist by the completeness of $S$, further $I_{E_n}x=I_{E_n}x_n$ and $I_{E_n}y=I_{E_n}y_n$ for each $n\in N$. Consequently, it is easy to see that $x$ and $y$ are $L^{0}-$independent on $G(S)$. On the other hand, if there is some $D_1\in\widetilde{\cal F}$ with $D_1\subset H(S)\backslash G(S)$ and $P(D_1)>0$ such that $\textmd{Rank}_{D_1}(S)\geq 2$, then $D_1\in {\cal G}$, which yields $D_1\subset G(S)$, a contradiction.
\end{proof}

For any complete $RN$ module $(S,\|\cdot\|)$ over $R$ with base $(\Omega,{\cal F},P)$, there are only two cases that may occur:\\
\indent Case (1): There exists an $E\in\widetilde{\cal F}$ with $P(E)>0$ and $E\subset H(S)$ such that $\textmd{Rank}_E(S)\geq 2$;\\
\indent Case (2): $\textmd{Rank}_D(S)<2$ for any $D\in\widetilde{\cal F}$ with $D\subset H(S)$ and $P(D)>0$.\\
In the sequel, when Case (1) occurs $G(S)$ is always understood as in Proposition \ref{prop:G(S)}(2), at which time $P(G(S))>0$, whereas Case (2) occurs we have $P(G(S))=0$. By Proposition \ref{prop:G(S)}(1) we have the following:

\begin{corollary}\label{cor:mod}
$\delta_{H(S)\backslash G(S)}(\epsilon)=I_{H(S)\backslash G(S)}$ for any $\epsilon\in L^{0}_+$ with $0<\epsilon\leq 2$ on $H(S)\backslash G(S)$.
\end{corollary}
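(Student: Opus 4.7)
Set $D = H(S)\setminus G(S)$ and assume $P(D) > 0$ (otherwise the statement is vacuous since $\delta_{H(S)\backslash G(S)}$ need not be considered, or equivalently $I_D = 0 = \delta_D(\epsilon)$ trivially). The plan is to show that on $D$ the defining set of $\delta_D(\epsilon)$ consists entirely of the element $I_D$, so the infimum equals $I_D$.

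First, I would invoke Proposition 2.3(2) to note that for every measurable $D' \subset D$ with $P(D') > 0$ one has $\mathrm{Rank}_{D'}(S) < 2$, so that the hypothesis of Proposition 2.3(1) holds with $E = D$. Next, using the fixed $x_0 \in S(1)$ from Lemma 2.1(6) with $\|x_0\| = I_{H(S)}$, I would take any competitor $x, y \in S(1)$ with $B_{xy} \supset D$ and $I_D\|x-y\| \geq \epsilon I_D$, and apply Proposition 2.3(1) to $I_D x$ and $I_D y$ (whose supports lie in $D$) to obtain $\xi, \eta \in L^{0}({\cal F},R)$ with $I_D x = \xi x_0$ and $I_D y = \eta x_0$, where we may take $\xi = \eta = 0$ off $D$.

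Now I would extract the key constraint by taking norms: since $x, y \in S(1)$ and $B_{xy} \supset D$ forces $D \subset A_x \cap A_y$, we have $I_D\|x\| = I_D\|y\| = I_D$, which translates into $|\xi| = |\eta| = I_D$. Hence $\xi, \eta$ take only values in $\{-1, +1\}$ on $D$. The condition $B_{xy} \supset D$ also gives $D \subset A_{x-y}$, and the identity $I_D(x-y) = (\xi - \eta)x_0$ shows that $|\xi - \eta| > 0$ on $D$, leaving the only possibility $\eta = -\xi$ on $D$. Consequently $I_D(x+y) = (\xi + \eta)x_0 = \theta$, so $I_D\|\frac{x+y}{2}\| = 0$ and therefore $I_D - I_D\|\frac{x+y}{2}\| = I_D$.

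To conclude, I would verify that the defining set is nonempty (for instance $x = x_0$, $y = -x_0$ are in $S(1)$, satisfy $B_{xy} = H(S) \supset D$, and $I_D\|x-y\| = 2I_D \geq \epsilon I_D$ by the standing assumption $\epsilon \leq 2$ on $D$), whence $\delta_D(\epsilon) = I_D$. The only genuinely delicate step is the first one: one must cleanly justify that Proposition 2.3(1) yields representatives $\xi, \eta$ that can be simultaneously supported in $D$, but this is immediate because $x_0$ has full support on $H(S)$, so any $\xi$ with $\xi x_0 = I_D x$ must vanish on $H(S) \setminus D = G(S)$.
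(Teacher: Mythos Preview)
Your argument is correct and is exactly the approach the paper intends: the corollary is recorded there as an immediate consequence of Proposition~2.2(1) (what you cite as Proposition~2.3---in this paper propositions, lemmas, and definitions carry separate counters), and you have simply spelled out the routine verification that any admissible pair $x,y$ in the defining set for $\delta_D(\epsilon)$ must satisfy $I_D(x+y)=\theta$, together with the nonemptiness check.
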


\begin{proposition}\label{prop:L0}
Let $(S,\|\cdot\|)$ be a complete $RN$ module over $R$ with base $(\Omega,{\cal F},P)$ and $P(G(S))>0$, and $u\in S$ with $\|u\|=I_{G(S)}$. Then there exists $v\in S$ with $\|v\|=I_{G(S)}$ such that $u$ and $v$ are $L^{0}-$independent on $G(S)$.
\end{proposition}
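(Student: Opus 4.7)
The plan is to use Hahn-Banach to produce an $L^{0}$-linear functional $f\in S^{\ast}$ with $f(u)=\|u\|=I_{G(S)}$ and $\|f\|^{\ast}=I_{G(S)}$ (available via Theorem \ref{thm:HahnB}), and then find $v\in S$ with $\|v\|=I_{G(S)}$ and $f(v)=0$. Once such a $v$ is produced, $L^{0}$-independence of $u,v$ on $G(S)$ follows in two lines: applying $f$ to a relation $\alpha I_{G(S)}u+\beta I_{G(S)}v=\theta$ gives $\alpha I_{G(S)}=0$, and then $\beta I_{G(S)}v=\theta$ together with $\|v\|=I_{G(S)}$ forces $\beta I_{G(S)}=0$.

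To construct $v$, invoke Proposition \ref{prop:G(S)}(2) to pick $x',y'\in S$ that are $L^{0}$-independent on $G(S)$, and replace them by $x:=I_{G(S)}x'$ and $y:=I_{G(S)}y'$; by Proposition \ref{prop:duli}(2) this new pair is still $L^{0}$-independent on $G(S)$ and now satisfies $A_{x}=A_{y}=G(S)$. Note that $\|u\|=I_{G(S)}$ forces $u=I_{G(S)}u$. Define the two ``Gram--Schmidt-type corrections''
\[
z_{1}:=x-f(x)u,\qquad z_{2}:=y-f(y)u,
\]
both of which lie in $I_{G(S)}S$ and satisfy $f(z_{1})=f(z_{2})=0$ by $L^{0}$-linearity of $f$ (using $f(u)=I_{G(S)}$ and $f(x)=I_{G(S)}f(x)$, $f(y)=I_{G(S)}f(y)$).

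The decisive step is to show that $D_{1}\cap D_{2}$ is null, where $D_{i}:=G(S)\cap[\|z_{i}\|=0]$. On $D:=D_{1}\cap D_{2}$ one has $I_{D}x=f(x)I_{D}u$ and $I_{D}y=f(y)I_{D}u$; eliminating $u$ yields $f(y)I_{D}\cdot x-f(x)I_{D}\cdot y=\theta$. If $P(D)>0$ then either both $f(x)I_{D}$ and $f(y)I_{D}$ vanish, forcing $I_{D}x=I_{D}y=\theta$ and contradicting $A_{x}=G(S)\supset D$, or at least one is nonzero, contradicting the $L^{0}$-independence of $x,y$ on $G(S)$. This is the only place where $\textmd{Rank}_{G(S)}(S)\geq 2$ is used, and it is the main obstacle. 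Once $P(D_{1}\cap D_{2})=0$ is established, set $E_{1}:=G(S)\setminus D_{1}$; then $\|z_{1}\|>0$ on $E_{1}$ and $\|z_{2}\|>0$ on $D_{1}$ up to a null set, so the element
\[
v:=\frac{I_{E_{1}}}{\|z_{1}\|+I_{E_{1}^{c}}}\,z_{1}+\frac{I_{D_{1}}}{\|z_{2}\|+I_{D_{1}^{c}}}\,z_{2}
\]
is well defined in $S$ and a routine computation on $E_{1}$, on $D_{1}$, and on $G(S)^{c}$ shows $\|v\|=I_{G(S)}$ and $f(v)=0$, which completes the proof.
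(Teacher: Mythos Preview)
Your proof is correct and follows a genuinely different route from the paper's. The paper argues directly with the pair $x,y$: it isolates the maximal set $D\subset G(S)$ on which $I_{D}u$ lies in the $L^{0}$-span of $x$ and $y$, observes that $u,x,y$ are $L^{0}$-independent on $G(S)\setminus D$, and then on $D$ performs a case split according to where the coefficients $\xi,\eta$ in $I_{D}u=\xi I_{D}x+\eta I_{D}y$ vanish, patching together $x$ and $\|x+y\|^{-1}(x+y)$ to build $v$. Your approach instead invokes the Hahn--Banach theorem (Theorem~\ref{thm:HahnB}) to obtain $f\in S^{\ast}$ with $f(u)=\|f\|^{\ast}=I_{G(S)}$ and then runs a Gram--Schmidt step, setting $z_{i}=x_{i}-f(x_{i})u$ and showing that the zero-sets of $\|z_{1}\|$ and $\|z_{2}\|$ cannot overlap inside $G(S)$; normalising and gluing finishes. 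This replaces the paper's multi-case bookkeeping by a single functional-analytic idea and yields a shorter argument; the paper's version, by contrast, is more hands-on and does not appeal to Hahn--Banach for this particular step. One small remark: the $L^{0}$-independence of $x=I_{G(S)}x'$ and $y=I_{G(S)}y'$ on $G(S)$ is immediate from the definition rather than from Proposition~\ref{prop:duli}(2); that proposition is what justifies $A_{x}=A_{y}=G(S)$ (via $G(S)\subset A_{x'y'}$).
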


\begin{proof} Since $\textmd{Rank}_{G(S)}(S)\geq 2$, we can take a pair of elements $x,y\in S$ with $\|x\|=\|y\|=I_{G(S)}$ such that $x$ and $y$ are $L^{0}-$independent on $G(S)$. Denote ${\cal B}=\{E\in\widetilde{\cal F}~|~E\subset G(S)$ and there exist $\xi,\eta\in L^{0}({\cal F},R)\textmd{ such that }I_Eu=\xi I_Ex+\eta I_Ey\}$. Then our proof is divided into the following two cases.

Case (1): when ${\cal B}=\{\tilde{\emptyset}\}$, it is easy to see that $u,x$ and $y$ are $L^{0}-$independent on $G(S)$, thus $v:=x$ or $y$ is just desired.

Case (2): otherwise, there exists a sequence $\{B_n,n\in N\}$ in ${\cal B}$ such that $I_{\cup_{n\geq 1}B_n}=\vee_{n\geq 1}I_{B_n}=\vee\{I_B:B\in{\cal B}\}$ and $\xi_n,\eta_n\in L^{0}({\cal F},R)$ such that $I_{B_n}u=\xi_nI_{B_n}x+\eta_nI_{B_n}y$ for each $n\in N$. Set $E_1=B_1,E_n=B_n\backslash \cup_{i=1}^{n-1}B_i,\forall n\geq 2$, then $\{\sum_{n=1}^{k}\xi_nI_{E_n},k\in N\}$ (accordingly, $\{\sum_{n=1}^{k}\eta_nI_{E_n},k\in N\}$) converges in $P$ to some element $\xi$ (accordingly, $\eta$) in $L^{0}({\cal F},R)$. Furthermore, $\xi I_{E_n}=\xi_n I_{E_n}$ and $\eta I_{E_n}=\eta_n I_{E_n},\forall n\in N$, then by the continuity of module multiplication,
$$I_Du=\xi I_Dx+\eta I_D y,\eqno(2.1)$$
where $D\triangleq\sum_{n=1}^{\infty}E_n=\cup_{n=1}^{\infty}B_n\subset G(S)$. It is easy to prove by way of contradiction that $u,x,$ and $y$ are $L^{0}-$independent on $G(S)\backslash D$.

Denote $F_{\xi}=D\cap[\xi=0]$ and $F_{\eta}=D\cap[\eta=0]$, then from (2.1) it follows that $I_{F_{\xi}}u=\eta I_{F_{\xi}}y$, which implies $I_{F_{\xi}}|\eta|=I_{F_{\xi}}$. Similarly, $I_{F_{\eta}}u=\xi I_{F_{\eta}}x$ and $I_{F_{\eta}}|\xi|=I_{F_{\eta}}$.

Let $v=I_{D\backslash(F_{\xi}\cup F_{\eta})}x+I_{F_{\xi}\cup F_{\eta}}\|x+y\|^{-1}(x+y)+I_{G(S)\backslash D}x$, clearly $\|v\|=I_{G(S)}$. Now we suppose that $k_1,k_2\in L^{0}({\cal F},R)$ are such that $k_1u+k_2v=\theta$, namely,
$$k_1(I_{G(S)\backslash D}u+\xi I_Dx+\eta I_D y)+k_2(I_{D\backslash(F_{\xi}\cup F_{\eta})}x+I_{F_{\xi}\cup F_{\eta}}\|x+y\|^{-1}(x+y)+I_{G(S)\backslash D}x)=\theta.\eqno(2.2)$$
Clearly, multiplying both sides of (2.2) by $I_{G(S)\backslash D}$ yields $k_1I_{G(S)\backslash D}u+k_2I_{G(S)\backslash D}x=\theta$, thus $k_1=k_2=0$ on $G(S)\backslash D$. In the same way, we can verify that $k_1=k_2=0$ on $F_{\xi}$, $F_{\eta}$ and $D\backslash(F_{\xi}\cup F_{\eta})$, respectively. Therefore, $k_1=k_2=0$ on $G(S)$.
\end{proof}

\section{Proofs of the main results}
We can now prove Theorem \ref{thm:IVT}, the idea of whose proof is very similar to that of \cite[Lemma 4.7]{KV}, but since Theorem \ref{thm:IVT} is of crucial importance in this paper, we give its proof in detail.

{\noindent\bf Proof of Theorem \ref{thm:IVT}.} It suffices to prove the special case when $f(Y_1)\leq f(Y_2)$ and $Y_1\leq Y_2$: since, otherwise, let $C=[f(Y_1)\leq f(Y_2)]$ and $D=[f(Y_1)>f(Y_2)]$, then for $C$ we apply the special case to $f_C=I_Cf$ and $\xi_C=I_C\xi$ so that we can obtain $\eta_C\in[Y_1,Y_2]$ such that $I_Cf(\eta_C)=I_C\xi$; for $D$ we apply the special case to $f_D=-I_Df$ and $\xi_D=-I_D\xi$ so that we can obtain $\eta_D\in[Y_1,Y_2]$ such that $I_Df(\eta_D)=I_D\xi$, consequently, $\eta:=I_C\eta_C+I_D\eta_D$ satisfies our requirements. In the following, we will give the proof of the special case.

Denote $A_1=[\xi=f(Y_1)], A_2=[\xi=f(Y_2)]$ and $H=\tilde{\Omega}\backslash(A_1\cup A_2)$, where $\tilde{\Omega}$ is the equivalence class of $\Omega$. If we can find some $\eta\in[Y_1,Y_2]$ such that $f(\eta)=\xi I_H$ then $f(I_{A_1}Y_1+I_{A_2\backslash A_1}Y_2+I_H\eta)=I_{A_1}\xi+I_{A_2\backslash A_1}\xi+I_H\xi=\xi$. Thus we can suppose that $f(Y_1)<\xi< f(Y_2)$ on $\tilde{\Omega}$, which certainly implies $Y_1<Y_2$ on $\tilde{\Omega}$.

Let $\eta=\wedge{\cal G}$, where ${\cal G}=\{Y\in [Y_1,Y_2]~|~f(Y)\geq \xi\textmd{ and }Y\geq Y_1\}$.
Clearly, $Y_2\in {\cal G}$, which shows that ${\cal G}$ is non-void and $\eta\in [Y_1,Y_2]$. Since ${\cal G}$ is dually directed, there is a sequence $\{W_n,n\in N\}\subset {\cal G}$ such that $W_n\searrow\eta(n\rightarrow\infty)$, which together with the continuity of $f$ implies that $f(W_n)\xrightarrow{P}f(\eta)(n\rightarrow\infty)$. Thus $f(\eta)\geq\xi$. We will further prove that $f(\eta)=\xi$ as follows.

Assume by way of contradiction that there exists some $E\in\widetilde{\cal F}$ with $P(E)>0$ such that $f(\eta)>\xi$ on $E$. Let $U_n=(\eta-1/n)\vee Y_1$ for each $n\in N$, then $U_n\rightarrow \eta(n\rightarrow\infty)$ by noticing that $\eta\geq Y_1$. We claim that $f(U_n)<\xi$ on $\tilde{\Omega}$ for each $n\in N$. Otherwise, there exists some $i\in N$ such that $P(D_i)>0$, where $D_i=[f(U_i)\geq\xi]$. Denote $B_i=[\eta-1/i>Y_1]$, then $P(D_i\cap B_i^c)=0$ (otherwise $I_{D_i\cap B_i^c}f(Y_1)=I_{D_i\cap B_i^c}f(U_i)\geq\xi I_{D_i\cap B_i^c}$, which is a contradiction), i.e., $D_i=D_i\cap B_i$. Further, $I_{D_i}f(\eta-1/i)=I_{D_i\cap B_i}f(U_i)=I_{D_i}f(U_i)$ and $\eta-1/i\geq Y_1$ on $D_i$, which yields $f(I_{D_i}(\eta-1/i)+I_{\tilde{\Omega}\backslash D_i}Y_2)\geq\xi$, and hence $I_{D_i}(\eta-1/i)+I_{D_i^c}Y_2\in{\cal G}$, which in turn implies  $I_{D_i}(\eta-1/i)+I_{D_i^c}Y_2\geq\eta$, but this is impossible.
Thus $P(D_n)=0$, namely, $f(U_n)<\xi$ on $\tilde{\Omega}$ for any $n\in N$.

Observing $I_Ef(U_n)<\xi I_E<f(\eta)I_E$ on $E$ and recalling $U_n\rightarrow \eta$, by the continuity of $f$ we have $I_Ef(\eta)\leq\xi I_E<f(\eta)I_E$ on $E$, which is an obvious contradiction. Therefore, $f(\eta)=\xi$.\quad$\Box$

Lemma \ref{lem:L3.1} below together with Proposition \ref{prop:L0} is a preparation for the proof of Proposition \ref{prop:hard} below that is key to the proof of Theorem \ref{thm:expr}.

\begin{lemma}\label{lem:L3.1}
Let $(S,\|\cdot\|)$ be an $RN$ module over $R$ with base $(\Omega,{\cal F},P)$, $E\in\widetilde{\cal F}$ with $P(E)>0$ and $x,y\in S$ with $\|x\|=I_{A_{xy}}$ and $\|y\|\leq 1$. If $x$ and $y$ are $L^{0}-$independent on $E$, then there exist $u_E,v_E\in S$ such that $\|u_E\|=\|v_E\|=I_E$ and $u_E-v_E=I_E(x-y)$.
\end{lemma}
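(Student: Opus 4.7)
The plan is to find $u_E$ on the ``unit sphere on $E$'' (i.e.\ with $\|u_E\|=I_E$) such that $u_E-I_E(x-y)$ also has norm $I_E$; setting $v_E:=u_E-I_E(x-y)$ then gives the required pair. The existence of such $u_E$ will be produced by a single application of Theorem~\ref{thm:IVT} to a continuous local path on that sphere.

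Put $z:=I_E(x-y)$. Since $x,y$ are $L^{0}$-independent on $E$, Proposition~\ref{prop:duli}(2) gives $E\subset A_{xy}\subset A_y$ and $\|z\|>0$ on $E$. Define
$$e:=I_E\,z/\|z\|,\qquad \tilde y:=I_E\,y/\|y\|,$$
with the convention $0/0=0$, so that $\|e\|=\|\tilde y\|=I_E$. Using the $L^{0}$-independence of $x$ and $y$, one verifies directly that $e$ and $\tilde y$ are themselves $L^{0}$-independent on $E$; in particular every nontrivial real combination $a e+b\tilde y$ is nonzero on $E$.

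For $s\in L^{0}({\cal F},R)$ put
$$p(s):=(\cos s)\,e+(\sin s)\,\tilde y,\qquad u(s):=I_E\cdot\frac{p(s)}{\|p(s)\|+I_{E^{c}}}\in S.$$
Because $\cos^{2}s+\sin^{2}s=1$ pointwise, the pair $((\cos s)(\omega),(\sin s)(\omega))$ is never $(0,0)$, so $\|p(s)\|>0$ on $E$ for every $s\in L^{0}$, while the $I_{E^{c}}$ summand in the denominator keeps the quotient well-defined off $E$. Consequently $u:L^{0}\to S$ is continuous and local in the $(\epsilon,\lambda)$-topology, with $\|u(s)\|=I_E$ for all $s$, $u(0)=e$, and $u(\pi)=-e$. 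Let $g(s):=\|u(s)-z\|$; this is continuous local, and a direct calculation gives
$$g(0)=|\,I_E-\|z\|\,|\leq I_E\quad\text{and}\quad g(\pi)=I_E+\|z\|\geq I_E,$$
the first bound coming from $\|z\|\leq\|x\|+\|y\|\leq 2I_E$ on $E$.

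Since $I_E\in[g(0)\wedge g(\pi),\,g(0)\vee g(\pi)]$, Theorem~\ref{thm:IVT} (applied to $g$ with $Y_1=0$ and $Y_2=\pi$) yields $s^{*}\in L^{0}$ with $0\leq s^{*}\leq\pi$ and $g(s^{*})=I_E$. Setting $u_E:=u(s^{*})$ and $v_E:=u_E-I_E(x-y)$ then gives $\|u_E\|=I_E$, $\|v_E\|=g(s^{*})=I_E$, and $u_E-v_E=I_E(x-y)$, as required. The main obstacle of the plan is to check rigorously that $s\mapsto u(s)$ really is a continuous local function from $L^{0}$ to $S$ in the $(\epsilon,\lambda)$-topology; this reduces to the continuity of the pointwise operations $\cos,\sin$ on $L^{0}({\cal F},R)$ together with the $L^{0}$-independence of $e$ and $\tilde y$ on $E$, the latter being precisely what prevents $\|p(s)\|$ from vanishing on $E$ for any value of $s$.
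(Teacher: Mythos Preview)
Your argument is correct and follows essentially the same route as the paper's proof: one builds a continuous local ``circular'' path on the unit sphere over $E$ from a pair of $L^{0}$-independent vectors and applies Theorem~\ref{thm:IVT} to hit the value $I_E$. The only difference is cosmetic: the paper parameterizes the circle with the original pair $x,y$ and the interval $[0,3\pi/4]$, obtaining endpoint values $f_E(0)=I_E\|y\|\leq I_E$ and $f_E(3\pi/4)=I_E(1+\|x-y\|)\geq I_E$, whereas you first normalize to $e=(x-y)/\|x-y\|$ and $\tilde y=y/\|y\|$ and use $[0,\pi]$; this costs you the small extra verification that $e,\tilde y$ inherit $L^{0}$-independence on $E$ (which you note), but otherwise the two computations are interchangeable.
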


\begin{proof} By Proposition \ref{prop:duli}(2) we can see that $E\subset A_{xy}$. Define a mapping $f_E:L^{0}({\cal F}, R)\rightarrow L^{0}({\cal F}, R)$ by
$$f_E(\alpha)=I_E\left\|\frac{(\cos\alpha)x+(\sin\alpha)y}{\|(\cos\alpha)x+(\sin\alpha)y\|}-x+y\right\|,\forall\alpha\in L^{0}({\cal F}, R),\eqno(3.1)$$
then it is clear that $f_E$ is a continuous local function. By the $L^{0}-$independence of $x$ and $y$ on $E$ we can see that $\|(\cos\alpha)x+(\sin\alpha)y\|>0$ on $E$. Since $f_E(0)=I_E\|y\|\leq I_E$ and $f_E(3\pi/4)=I_E(1+\|x-y\|)\geq I_E$, by Theorem \ref{thm:IVT} there exists $\eta_E\in L^{0}({\cal F}, R)$ with $0\leq\eta_E\leq3\pi/4$ such that $f_E(\eta_E)=I_E$. Denote $$u_E=I_E\frac{(\cos\eta_E)x+(\sin\eta_E)y}{\|(\cos\eta_E)x+(\sin\eta_E)y\|}\eqno(3.2)$$
and $v_E=I_E(u_E-x+y)$, then $u_E$ and $v_E$ are desired. Indeed, $\|u_E\|=I_E$, $\|v_E\|=f_E(\eta_E)=I_E$ and $u_E-v_E=I_Eu_E-I_E(u_E-x+y)=I_E(x-y)$.
\end{proof}

\begin{proposition}\label{prop:hard}
Let $(S,\|\cdot\|)$ be a complete $RN$ module over $R$ with base $(\Omega,{\cal F},P)$ and $P(G(S))>0$, and $x,y\in S$ with $P(A_{xy})>0,A_{xy}\subset G(S),\|x\|=I_{A_{xy}}$ and $\|y\|\leq 1$. Then there exist $u,v\in S$ satisfying the following two conditions:\\
\indent (1) $\|u\|=\|v\|=I_{A_{xy}}$ and $u-v=I_{A_{xy}}(x-y)$,\\
\indent (2) $\|u+v\|\geq I_{A_{xy}}\|x+y\|$.
\end{proposition}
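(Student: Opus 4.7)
My plan is to split $A_{xy}$ according to whether $x$ and $y$ are $L^{0}$-independent, construct $u,v$ separately on each part, and combine them, using the intermediate value theorem (Theorem \ref{thm:IVT}) together with the Hahn--Banach theorem (Theorem \ref{thm:HahnB}) to secure condition~(2). I would first apply Proposition \ref{prop:duli}(1) to obtain a unique $F\subset A_{xy}$ on which $x,y$ are $L^{0}$-dependent, and set $E=A_{xy}\setminus F$; on $F$ one has $I_{F}y=\zeta I_{F}x$ for some $\zeta\in L^{0}({\cal F},R)$ with $0<|\zeta|\leq 1$ on $F$, so that $I_{F}(x-y)=(1-\zeta)I_{F}x$ and $I_{F}\|x+y\|=|1+\zeta|I_{F}$.

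On $E$, Lemma \ref{lem:L3.1} already furnishes $u_{E},v_{E}\in S$ with $\|u_{E}\|=\|v_{E}\|=I_{E}$ and $u_{E}-v_{E}=I_{E}(x-y)$; what remains is to arrange $\|u_{E}+v_{E}\|\geq I_{E}\|x+y\|$. By Theorem \ref{thm:HahnB} pick $g\in S^{*}$ with $g(x+y)=\|x+y\|$ and $\|g\|^{*}=I_{A_{x+y}}$. Since $u_{E}-v_{E}=I_{E}(x-y)$ gives $g(u_{E}+v_{E})=2g(u_{E})-g(x-y)I_{E}$ and $g(x+y)=g(x)+g(y)$, condition~(2) on $E$ reduces to arranging $g(u_{E})\geq g(x)I_{E}$, after which $\|u_{E}+v_{E}\|\geq g(u_{E}+v_{E})\geq I_{E}\|x+y\|$. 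I would therefore rerun the proof of Lemma \ref{lem:L3.1} along a modified sphere-path $u_{E}(\alpha)$ that starts at $u_{E}(0)=I_{E}x$ (where $g(u_{E}(0))=g(x)I_{E}$ and $\|v_{E}(0)\|=I_{E}\|y\|\leq I_{E}$) and is steered by an auxiliary direction $z\in S$ with $\|z\|=I_{E}$, $g(z)=0$, and $z,x$ being $L^{0}$-independent on $E$ (obtained from Proposition \ref{prop:L0} after an elementary orthogonalisation of any $L^{0}$-independent $z_{0}$ against $g$), so that $g(u_{E}(\alpha))\geq g(x)I_{E}$ is preserved while $\|u_{E}(\alpha)-I_{E}(x-y)\|$ is driven past $I_{E}$; Theorem \ref{thm:IVT} then selects the needed $\alpha$.

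On $F$ the one-dimensional $L^{0}$-submodule generated by $I_{F}x$ is too small to contain candidates $u_{F},v_{F}$: a short check shows that $|\alpha|=|\beta|=I_{F}$ and $\alpha-\beta=(1-\zeta)I_{F}$ force $|1-\zeta|\in\{0,2\}$ a.s.\ on $F$. I would therefore extend $I_{F}x$ to a unit-norm element on all of $G(S)$ via Lemma \ref{lem:RNprop}(6) and invoke Proposition \ref{prop:L0} to produce $w\in S$ with $\|w\|=I_{F}$ and $x,w$ $L^{0}$-independent on $F$. Using $x$ and $w$ as the basic pair, the same Hahn--Banach plus intermediate-value recipe of the previous paragraph, now applied inside the $L^{0}$-span of $I_{F}x$ and $w$ with a functional $g_{F}$ norming $(1+\zeta)I_{F}x$, yields $u_{F},v_{F}$ satisfying (1) and (2) on $F$.

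Setting $u:=u_{E}+u_{F}$ and $v:=v_{E}+v_{F}$ closes the argument: the $I_{E}$ and $I_{F}$ factors make the supports of $u_{E},v_{E}$ disjoint from those of $u_{F},v_{F}$, so $\|u\|=\|u_{E}\|+\|u_{F}\|=I_{A_{xy}}$, and likewise $\|v\|=I_{A_{xy}}$ and $\|u+v\|=\|u_{E}+v_{E}\|+\|u_{F}+v_{F}\|\geq I_{A_{xy}}\|x+y\|$. The main obstacle I anticipate is the sphere-path construction on $E$: producing a single $L^{0}$-parameter curve on the random unit sphere that simultaneously stays inside the closed half-space $\{g(\cdot)\geq g(x)I_{E}\}$ and shifts $\|v_{E}(\alpha)\|$ from a value $\leq I_{E}$ to one $\geq I_{E}$. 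This is where Proposition \ref{prop:L0} (to supply $L^{0}$-independent directions), Theorem \ref{thm:HahnB} (to supply the functional), and the locality and continuity hypotheses of Theorem \ref{thm:IVT} (to solve the resulting $L^{0}$-valued scalar equation) must interlock.
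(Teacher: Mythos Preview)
Your decomposition of $A_{xy}$ into the $L^{0}$-independent part $E=A_{xy}\setminus F$ and the dependent part $F$ matches the paper's, but your route to condition~(2) has a real gap. You reduce (2) on $E$ to arranging $g(u_{E})\geq g(x)I_{E}$ for a functional $g$ norming $x+y$; this is only a sufficient condition, and it can be \emph{unachievable}. Already in $S=(\mathbb{R}^{2},\|\cdot\|_{2})$ with $x=(1,0)$, $y=(0.9,\,0.1)$ one has $g(x)=1.9/\sqrt{3.62}\approx 0.999$, while the only two points $u_{\pm}$ on the unit sphere with $\|u_{\pm}-(x-y)\|=1$ satisfy $g(u_{+})\approx 0.79$ and $g(u_{-})\approx -0.69$, both strictly below $g(x)$. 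Hence no sphere-path steered by any $z$ with $g(z)=0$ can land on a $u_{E}$ meeting your constraint, yet $\|u_{+}+v_{+}\|\approx 1.99>1.90\approx\|x+y\|$, so the proposition does hold --- just not through your reduction. The same obstruction applies verbatim to your plan on $F$.

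The paper avoids this by imposing no constraint along the path. It keeps $u$ in the explicit Lemma~\ref{lem:L3.1} form $u=I\cdot\dfrac{(\cos\eta)x+(\sin\eta)y}{\|(\cos\eta)x+(\sin\eta)y\|}$ (on the dependent part $F$ it first perturbs $x$ by a small multiple of an auxiliary independent direction to restore $L^{0}$-independence, then applies Lemma~\ref{lem:L3.1} to the perturbed pair). Condition~(2) is established \emph{a posteriori}: using this explicit form one solves for $\lambda,\beta\in L^{0}_{+}$ with $\lambda\geq 1$ satisfying $x=\lambda\,\tfrac{x+y}{2}-\beta(u-x)$, from which algebraic manipulation gives $\tfrac{\lambda}{2}\|x+y\|=\|x+\beta(u-x)\|$ and $\tfrac{\lambda}{2}\|u+v\|=\|x+(\beta+\lambda)(u-x)\|$. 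The remaining inequality $\|x+\beta(u-x)\|\leq\|x+(\beta+\lambda)(u-x)\|$ follows from a Hahn--Banach functional $x^{\ast}$ that norms $u$ (not $x+y$): since $x^{\ast}(u)=1$ and $\|x^{\ast}\|^{\ast}\leq 1$ one gets $x^{\ast}(x)\geq x^{\ast}(y)$ for free, and a short convexity argument along the line $t\mapsto x+t(u-x)$ finishes.
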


\begin{proof} It is divided into two Steps; Step 1 is to obtain $u$ and $v$ satisfying (1), and Step 2 to show that these two elements $u$ and $v$ also satisfy (2).

(Step 1): First of all, there exist $u_0,v_0$ in $S$ with $\|u_0\|=\|v_0\|=I_{G(S)}$ such that $u_0$ and $v_0$ are $L^{0}-$independent on $G(S)$ by the meaning of $G(S)$.

Let $F,\xi$ and $\eta$ be the same as obtained in Proposition \ref{prop:duli}(1) with respect to $x$ and $y$ (if $x$ and $y$ are $L^{0}-$independent on $A_{xy}$, then $F$ is $\tilde{\emptyset}$) and denote $G=A_{xy}\backslash F, G_1=G\cap [\|y\|<1],F_1=F\cap [\|y\|<1]$ and $E_0=A_{xy}\cap[\|y\|=1]$. Clearly $A_{xy}=G_1\cup F_1\cup E_0$ since $\|y\|\leq 1$. We will deal with the problem on $G_1,F_1$ and $E_0$, respectively. Without loss of generality, we can assume that $G_1,F_1$ and $E_0$ all have positive probabilities.

Considering $x$ and $y$ on $G_1$, we obtain $u_{G_1}$ and $v_{G_1}$ in $S$ such that $\|u_{G_1}\|=\|v_{G_1}\|=I_{G_1}$ and $u_{G_1}-v_{G_1}=I_{G_1}(x-y)$ by Lemma \ref{lem:L3.1}.

Then consider the case on $F_1$. From $\xi I_Fx+\eta I_Fy=0$ and $F\subset[\xi\neq 0]\cap[\eta\neq 0]$ it follows that $I_Fy=\gamma I_Fx$, where $\gamma=-\xi\eta^{-1}\neq 0$ on $F$, further $|\gamma|I_F=I_F\|y\|\leq I_F$, so that $F_1=F\cap[|\gamma|<1]=F\cap[0<|\gamma|<1]$ and let $F_{11}=F\cap[0<\gamma<1]$ and $F_{12}=F\cap[-1<\gamma<0]$, then $F_1=F_{11}\cup F_{12}$.

Let $x^{\prime}=I_{A_{xy}}x+I_{G(S)\backslash A_{xy}}u_0$, then $\|x^{\prime}\|=I_{G(S)}$, and by Proposition \ref{prop:L0} there exists $x^{\prime\prime}\in S$ with $\|x^{\prime\prime}\|=I_{G(S)}$ such that $x^{\prime}$ and $x^{\prime\prime}$ are $L^{0}-$independent on $G(S)$, which also implies that $x$ and $x^{\prime\prime}$ are $L^{0}-$independent on both $F_{11}$ and $F_{12}$.

Let $$w_1=I_{F_{11}}\frac{1-\gamma}{2}x^{\prime\prime},~~~x_1=I_{F_{11}}\frac{w_1+x}{\|w_1+x\|},~~~y_1=I_{F_{11}}y+x_1-I_{F_{11}}x,$$
then it is easy to see that $x$ and $w_1$, further $x$ and $x_1$ are $L^{0}-$independent on $F_{11}$, so that $x_1$ and $y_1$ are $L^{0}-$independent on $F_{11}$ by noticing that $y_1=(\gamma-1)I_{F_{11}}x+x_1$. By Proposition \ref{prop:duli}(2) $F_{11}\subset A_{x_1y_1}$, which together with the obvious fact that $A_{x_1}=F_{11}$ and $A_{y_1}\subset F_{11}$ implies that $A_{x_1y_1}=F_{11}$.
Besides,
\begin{eqnarray*}
I_{F_{11}}\|x_1-x\|&=&I_{F_{11}}\left\|\frac{w_1+x}{\|w_1+x\|}-x-w_1+w_1\right\|\\
&\leq&I_{F_{11}}\left\|\frac{w_1+x}{\|w_1+x\|}-x-w_1\right\|+I_{F_{11}}\|w_1\|\\
&=&I_{F_{11}}\big|\|x\|-\|w_1+x\|\big|+I_{F_{11}}\|w_1\|\\
&\leq& 2I_{F_{11}}\|w_1\|\\
&=&I_{F_{11}}(1-\gamma),
\end{eqnarray*}
from which it follows that $I_{F_{11}}\|y_1\|=I_{F_{11}}\|\gamma x+x_1-x\|\leq I_{F_{11}}\gamma+I_{F_{11}}(1-\gamma)=I_{F_{11}}.$

Applying Lemma \ref{lem:L3.1} to $x_1$ and $y_1$ on $F_{11}$, we obtain $u_{F_{11}},v_{F_{11}}\in S$ with $\|u_{F_{11}}\|=\|v_{F_{11}}\|=I_{F_{11}}$ such that $u_{F_{11}}-v_{F_{11}}=I_{F_{11}}(x_1-y_1)=I_{F_{11}}(x-y)$.

In the same way, let  $w_2=I_{F_{12}}((1+\gamma)/2)x^{\prime\prime}$, $x_2=I_{F_{12}}\|w_2+x\|^{-1}(w_2+x)$ and consider $x_2$ and $y_2:=I_{F_{12}}y+x_2-I_{F_{12}}x$, we can obtain $u_{F_{12}},v_{F_{12}}\in S$ with $\|u_{F_{12}}\|=\|v_{F_{12}}\|=I_{F_{12}}$ such that $u_{F_{12}}-v_{F_{12}}=I_{F_{12}}(x-y)$.

As for the case on $E_0$, let $u_{E_0}=I_{E_0}x$ and $v_{E_0}=I_{E_0}y$, then $\|u_{E_0}\|=\|v_{E_0}\|=I_{E_0}$ and $u_{E_0}-v_{E_0}=I_{E_0}(x-y)$.

Finally, let $u=u_{G_1}+u_{F_{11}}+u_{F_{12}}+u_{E_0}$ and $v=v_{G_1}+v_{F_{11}}+v_{F_{12}}+v_{E_0}$, it is easy to see that $\|u\|=\|v\|=I_{A_{xy}}$ and $u-v=I_{A_{xy}}(u-v)=I_{A_{xy}}(x-y)$ by noticing that $A_{xy}=G_1\cup F_{11}\cup F_{12}\cup E_0$.

(Step 2): Since $A_{xy}=G_1\cup F_1\cup E_0$ and $I_{E_0}\|u+v\|=I_{E_0}\|x+y\|$, we only need to prove $I_{F_1\cup G_1}\|u+v\|\geq I_{F_1\cup G_1}\|x+y\|$.

First we will find $\lambda,\beta\in L^{0}_+$ with $\lambda\geq 1$ on $F_1\cup G_1$ such that $$I_{F_1\cup G_1}x=\lambda I_{F_1\cup G_1}\frac{x+y}{2}-\beta I_{F_1\cup G_1}(u-x).\eqno(3.3)$$

Since $I_{F_1}y=\gamma I_{F_1}x$, where $0<|\gamma|<1$ on $F_1$, we can take $\lambda,\beta\in L^{0}_+$ such that $\lambda I_{F_1}=2I_{F_1}(1+\gamma)^{-1}$ and $\beta I_{F_1}=0$, which certainly implies the validity of (3.3) on $F_1$ and that $\lambda\geq 1$ and $\beta\geq 0$ on $F_1$. Further, recalling that $I_{G_1}u=u_{G_1}$ is determined by (3.2) when $E$ is replaced with $G_1$, considering (3.3) on $G_1$ we have
$$\left\{ \begin{array}{ll}
\frac{\lambda}{2}I_{G_1}&=I_{G_1}\beta\frac{\sin\eta_{G_1}}{\|(\cos\eta_{G_1})x+(\sin\eta_{G_1})y\|},\\
I_{G_1}&=\frac{\lambda}{2}I_{G_1}-I_{G_1}\beta\frac{\cos\eta_{G_1}}{\|(\cos\eta_{G_1})x+(\sin\eta_{G_1})y\|}+I_{G_1}\beta,
 \end{array} \right.$$
which yields
$$\beta I_{G_1}=I_{G_1}\frac{\|(\cos\eta_{G_1})x+(\sin\eta_{G_1})y\|}{\|(\cos\eta_{G_1})x+(\sin\eta_{G_1})y\|-\cos\eta_{G_1}+\sin\eta_{G_1}}$$
and
$$\lambda I_{G_1}=I_{G_1}\left(1-\beta+\beta\frac{\cos\eta_{G_1}+\sin\eta_{G_1}}{\|(\cos\eta_{G_1})x+(\sin\eta_{G_1})y\|}\right),$$
Notice that $f_{G_1}(\eta_{G_1})=I_{G_1}$. We will check that $\beta\geq 0$ on $G_1$ and $\lambda\geq 1$ on $G_1$ as follows.

We claim that $\sin\eta_{G_1}>0$ on $G_1$, otherwise there exists $D\in\widetilde{\cal F}$ with $D\subset G_1$ and $P(D)>0$ such that $I_D\sin\eta_{G_1}=0$, then $I_D\eta_{G_1}=0$ since $0\leq\eta_{G_1}\leq3\pi/4$, and by (3.1) we have $I_Df_{G_1}(\eta_{G_1})=I_D\|y\|$, a contradiction to the equality $f_{G_1}(\eta_{G_1})=I_{G_1}$. Further, combining the relation
\begin{eqnarray*}
I_{G_1}\|(\cos\eta_{G_1})x+(\sin\eta_{G_1})y\|
&\geq& I_{G_1}(|\cos\eta_{G_1}|-|\sin\eta_{G_1}|\|y\|)\\
&>&I_{G_1}(|\cos\eta_{G_1}|-\sin\eta_{G_1})\\
&\geq& I_{G_1}(\cos\eta_{G_1}-\sin\eta_{G_1}))\textmd{ on $G_1$}
\end{eqnarray*}
and the $L^{0}-$independence of $x$ and $y$ on $G_1$ we can see that $\beta>0$ on $G_1$.

On the other hand, applying Theorem \ref{thm:HahnB} to $I_{F_1\cup G_1}u$ we obtain some $x^{\ast}\in S^{\ast}$ such that $x^{\ast}(I_{F_1\cup G_1}u)=I_{F_1\cup G_1}$ and $\|x^{\ast}\|=I_{F_1\cup G_1}$. Thus $I_{F_1\cup G_1}\geq x^{\ast}(I_{F_1\cup G_1}v)=I_{F_1\cup G_1}x^{\ast}(u-x+y)=I_{F_1\cup G_1}-x^{\ast}(I_{F_1\cup G_1}x)+x^{\ast}(I_{F_1\cup G_1}y)$, which yields $x^{\ast}(I_{F_1\cup G_1}x)\geq x^{\ast}(I_{F_1\cup G_1}y)$. Again since $\cos\eta_{G_1}+\sin\eta_{G_1}\geq 0$, it follows from
$$I_{G_1}=x^{\ast}(I_{G_1}u)=x^{\ast}\left(I_{G_1}\frac{(\cos\eta_{G_1})x+(\sin\eta_{G_1})y}{\|(\cos\eta_{G_1})x+(\sin\eta_{G_1})y\|}\right)$$
that
\begin{eqnarray*}
I_{G_1}\|(\cos\eta_{G_1})x+(\sin\eta_{G_1})y\|&=&(\cos\eta_{G_1})x^{\ast}(I_{G_1}x)+(\sin\eta_{G_1})x^{\ast}(I_{G_1}y)\\
&\leq& x^{\ast}(I_{G_1}x)(\cos\eta_{G_1}+\sin\eta_{G_1})\\
&\leq& I_{G_1}(\cos\eta_{G_1}+\sin\eta_{G_1}),
\end{eqnarray*}
which implies that $\lambda I_{G_1}\geq I_{G_1}$.

Next, we have the following equivalent relations:
\begin{eqnarray*}
(3.3)&\Leftrightarrow& \frac{\lambda}{2}I_{F_1\cup G_1}(y+x)=\beta I_{F_1\cup G_1}u+(1-\beta)I_{F_1\cup G_1}x\\
&\Leftrightarrow& \frac{\lambda}{2}I_{F_1\cup G_1}(y-x)=\beta I_{F_1\cup G_1}u+(1-\beta-\lambda)I_{F_1\cup G_1}x\\
&\Leftrightarrow& \frac{\lambda}{2}I_{F_1\cup G_1}(v-u)=\beta I_{F_1\cup G_1}u+(1-\beta-\lambda)I_{F_1\cup G_1}x ~(\textmd{by the results in (1)})\\
&\Leftrightarrow& \frac{\lambda}{2}I_{F_1\cup G_1}(v+u)=I_{F_1\cup G_1}x+(\beta+\lambda)I_{F_1\cup G_1}(u-x),
\end{eqnarray*}
so that
\begin{equation*}
\begin{cases}I_{F_1\cup G_1}\|x+(\beta+\lambda)(u-x)\|=\frac{\lambda}{2}I_{F_1\cup G_1}\|u+v\|,\\
I_{F_1\cup G_1}\|x+\beta(u-x)\|=\frac{\lambda}{2}I_{F_1\cup G_1}\|x+y\|.
\end{cases}\eqno(3.4)
\end{equation*}

Besides, we have the following relations:
\begin{eqnarray*}
I_{F_1\cup G_1}\|x+\beta(u-x)\|&\stackrel{\textmd{(a)}}{=}&\vee\{I_{F_1\cup G_1}f(x)+\beta I_{F_1\cup G_1}f(u-x)~|~f\in S^{\ast}(1)\}\\
&\stackrel{\textmd{(b)}}{\leq}&\left(\vee\{I_{F_1\cup G_1}f(x)+(\beta+\lambda)I_{F_1\cup G_1}f(u-x)~|~f\in S^{\ast}(1)\}\right)\vee I_{F_1\cup G_1}\\
&\stackrel{\textmd{(c)}}{=}&\vee\{I_{F_1\cup G_1}f(x)+(\beta+\lambda)I_{F_1\cup G_1}f(u-x)~|~f\in S^{\ast}(1)\}\\
&\stackrel{\textmd{(d)}}{=}&I_{F_1\cup G_1}\|x+(\beta+\lambda)(u-x)\|.
\end{eqnarray*}\\
Indeed, $\textmd{(a)}$ and $\textmd{(d)}$ are clear by Lemma \ref{lem:RNprop}(8), and $\textmd{(b)}$ and $\textmd{(c)}$ can be verified respectively as follows.

Denote $F_1\cup G_1=D^{(1)}_f\cup D^{(2)}_f$ for any $f\in S^{\ast}(1)$, where $D^{(1)}_f=(F_1\cup G_1)\cap[f(x)+\beta f(u-x)>1]$ and $D^{(2)}_f=(F_1\cup G_1)\cap[f(x)+\beta f(u-x)\leq 1]$, then we have that $f(u-x)>0$ on $D^{(1)}_f$, so that $I_{D^{(1)}_f}(f(x)+\beta f(u-x))\leq I_{D^{(1)}_f}(f(x)+(\beta+\lambda) f(u-x))$, which together with the obvious fact that $I_{D^{(2)}_f}(f(x)+\beta f(u-x))\leq I_{D^{(2)}_f}$ implies the inequality $\textmd{(b)}$.

Recalling $x^{\ast}(I_{F_1\cup G_1}u)=\|x^{\ast}\|=I_{F_1\cup G_1}$ and $\beta+\lambda\geq 1$ on $F_1\cup G_1$ we have that $I_{F_1\cup G_1}x^{\ast}(x)+(\beta+\lambda)I_{F_1\cup G_1}x^{\ast}(u-x)=I_{F_1\cup G_1}(1-\beta-\lambda)x^{\ast}(x)+(\beta+\lambda)I_{F_1\cup G_1}\geq I_{F_1\cup G_1}(1-\beta-\lambda)+(\beta+\lambda)I_{F_1\cup G_1}=I_{F_1\cup G_1}$, which yields the equality $\textmd{(c)}$.

Finally, combining the previous relations with (3.4) we have $I_{F_1\cup G_1}\|u+v\|\geq I_{F_1\cup G_1}\|x+y\|$, which completes the proof.
\end{proof}

Lemma \ref{lem:L3.2} below, which reveals the essence of Condition ($\triangle$), will play important roles in the proofs of Theorem \ref{thm:expr} and Proposition \ref{prop:key}.

\begin{lemma}\label{lem:L3.2}
Let $(S,\|\cdot\|)$ be an $RN$ module over $R$ with base $(\Omega,{\cal F},P)$, then the following two statements are equivalent to each other for any $D\in\widetilde{\cal F}$ with $D\subset H(S)$ and $P(D)>0$:\\
\indent (1) $\textmd{Rank}_D(S)\geq 2$;\\
\indent (2) $\{\epsilon I_D:\epsilon\in L^{0}_+\textmd{ with }0<\epsilon\leq 2\textmd{ on }D\}\subset\{\|u-v\|:u,v\in S\textmd{ with }\|u\|=\|v\|=I_D\}$.
\end{lemma}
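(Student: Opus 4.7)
The proof naturally splits into the two implications. For $(2)\Rightarrow(1)$, I argue by contrapositive. Assume $\textmd{Rank}_D(S)<2$; then by the remarks following Definition \ref{def:nizhi}, no two elements of $S$ are $L^{0}$-independent on any subset of $D$ of positive probability. Fix any $u,v\in S$ with $\|u\|=\|v\|=I_D$, and for any $E\in\widetilde{\cal F}$ with $E\subset D$ and $P(E)>0$, take $\xi,\eta\in L^{0}({\cal F},R)$, not both zero on $E$, such that $\xi I_E u+\eta I_E v=\theta$. Multiplying successively by $I_{E\cap[\xi=0]}$ and $I_{E\cap[\eta=0]}$ and using that $\|u\|=\|v\|=I_E$ on $E$ forces $[\xi=0]\cap E=[\eta=0]\cap E$; on the complementary part $F\subset E$ (where both are nonzero and $P(F)>0$) one gets $I_F u=cI_F v$ with $|c|=1$, so $c=\pm 1$ and hence $\|u-v\|\in\{0,2\}$ a.s.\ on $F$. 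A standard maximal-subset exhaustion then shows that $\|u-v\|(\omega)\in\{0,2\}$ almost surely on all of $D$ for \emph{every} admissible pair $u,v$. Choosing $\epsilon=1$, so that $\epsilon I_D=I_D$, then contradicts (2), since $I_D$ is neither $0$ nor $2I_D$ on $D$.

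For $(1)\Rightarrow(2)$, the strategy is to rotate along an ``$L^{0}$-unit circle'' and invoke Theorem \ref{thm:IVT}. Pick $x,y\in S$ that are $L^{0}$-independent on $D$; by Proposition \ref{prop:duli}(2) we have $D\subset A_{xy}$, so replacing $x,y$ by $I_D\|x\|^{-1}x$ and $I_D\|y\|^{-1}y$ (with the inverses regularised off $D$) we may assume $\|x\|=\|y\|=I_D$ while preserving $L^{0}$-independence on $D$. For $\alpha\in L^{0}({\cal F},R)$ set
$$w(\alpha)=(\cos\alpha)x+(\sin\alpha)y,\qquad \tilde{w}(\alpha)=\frac{I_D\,w(\alpha)}{I_D\|w(\alpha)\|+I_{D^c}},\qquad f(\alpha)=I_D\|x-\tilde{w}(\alpha)\|.$$
Since $\cos\alpha$ and $\sin\alpha$ cannot vanish simultaneously, the $L^{0}$-independence of $x,y$ on every subset of $D$ of positive probability yields $\|w(\alpha)\|>0$ on $D$, so $\tilde{w}(\alpha)$ is well defined with $\|\tilde{w}(\alpha)\|=I_D$.

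A direct check shows that $f$ is continuous and local: locality holds because $\cos$ and $\sin$ on $L^{0}$, module multiplication and the norm each commute with multiplication by indicator functions, while the regularising $+I_{D^c}$ in the denominator of $\tilde{w}(\alpha)$ is annihilated by the outer $I_D$ in $f$. Evaluating endpoints gives $f(0)=I_D\|x-x\|=0$ and $f(\pi)=I_D\|x-(-x)\|=2I_D$. For any $\epsilon\in L^{0}_+$ with $0<\epsilon\leq 2$ on $D$, we have $\epsilon I_D\in[0,2I_D]=[f(0),f(\pi)]$, so Theorem \ref{thm:IVT} applied with $Y_1=0,\,Y_2=\pi$ yields some $\eta\in[0,\pi]$ with $f(\eta)=\epsilon I_D$. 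Taking $u=x$ and $v=\tilde{w}(\eta)$, both have norm $I_D$ and supports inside $D$, hence $\|u-v\|=I_D\|u-v\|=f(\eta)=\epsilon I_D$, as required.

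The step I expect to be trickiest is the bookkeeping verifying that $f$ is local in the presence of the regularising term $+I_{D^c}$, together with ensuring that the initial normalisation of $x,y$ to norm $I_D$ does not destroy $L^{0}$-independence on $D$ (which it does not, because multiplication by the strictly positive, $D$-supported factors $I_D\|x\|^{-1}$, $I_D\|y\|^{-1}$ is invertible on $D$). Once these routine points are in hand, Theorem \ref{thm:IVT} does essentially all the work.
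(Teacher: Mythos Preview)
Your argument for $(1)\Rightarrow(2)$ is correct and is essentially the paper's proof: pick an $L^{0}$-independent pair, normalise to norm $I_D$, rotate along the $L^{0}$-unit circle via $\alpha\mapsto(\cos\alpha)x+(\sin\alpha)y$, observe the resulting continuous local function runs from $0$ to $2I_D$, and apply Theorem \ref{thm:IVT}. The extra care you take with the regularising $+I_{D^c}$ and with the normalisation is fine.

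Your argument for $(2)\Rightarrow(1)$, however, has a genuine gap. You assert that $\textmd{Rank}_D(S)<2$ implies that no two elements are $L^{0}$-independent on \emph{any} subset $E\subset D$ with $P(E)>0$, citing the remark after Definition \ref{def:nizhi}. But that remark gives the opposite monotonicity: $\textmd{Rank}_D(S)\geq 2$ implies $\textmd{Rank}_E(S)\geq 2$ for $E\subset D$, not the converse. In fact your claim is false. If $D=D_1\cup D_2$ with $\textmd{Rank}_{D_1}(S)\geq 2$ while on $D_2$ every element is an $L^{0}$-multiple of a single vector, then any pair $x,y$ fails $L^{0}$-independence on $D$ (a nontrivial relation is supported on $D_2$), so $\textmd{Rank}_D(S)<2$; yet there certainly exist $u,v$ with $\|u\|=\|v\|=I_D$ that are $L^{0}$-independent on $D_1$, and for such a pair $\|u-v\|$ need not lie in $\{0,2\}$ on $D_1$. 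Hence your conclusion ``$\|u-v\|\in\{0,2\}$ a.s.\ on all of $D$ for every admissible pair'' does not follow, and the exhaustion argument built on it collapses.

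The paper avoids this by arguing directly rather than by contrapositive: assuming (2), it picks $x,y\in S$ with $\|x\|=\|y\|=I_D$ and $\|x-y\|=I_D$ (take $\epsilon=1$), and shows \emph{these} are $L^{0}$-independent on $D$. Indeed, if $\xi I_D x+\eta I_D y=\theta$ with $\xi\neq 0$ on some $E\subset D$ of positive probability, then $I_E x=-\xi^{-1}\eta I_E y$, whence $|\xi^{-1}\eta|=1$ on $E$, so $-\xi^{-1}\eta=\pm 1$ and $I_E\|x-y\|\in\{0,2I_E\}$, contradicting $\|x-y\|=I_D$. This two-line direct check replaces your contrapositive entirely.
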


\begin{proof}
(2)$\Rightarrow$(1). Take an arbitrary pair $x,y\in S$ such that $\|x\|=\|y\|=I_D$ and $\|x-y\|=I_D$, we can show that $x$ and $y$ are $L^{0}-$independent on $D$. In fact, let $\xi,\eta\in L^{0}({\cal F},R)$ such that $\xi I_D x+\eta I_D y=\theta$ and suppose that there exists some $E\in\widetilde{\cal F}$ with $E\subset D$ and $P(E)>0$ such that $\xi\neq 0$ on $E$, then $I_Ex=-I_E\xi^{-1}\eta y$, which implies $I_E|\xi^{-1}\eta|=I_E$. This will lead to a contradiction by noticing that $I_E\|x-y\|=I_E$. Thus $\xi I_D=0$, and similarly $\eta I_D=0$.

(1)$\Rightarrow$(2).
Since $\textmd{Rank}_D(S)\geq 2$, we can take $x,y\in S$ with $\|x\|=\|y\|=I_{D}$ such that $x$ and $y$ are $L^{0}-$independent on $D$, and define $f:L^{0}({\cal F},R)\rightarrow L^{0}({\cal F},R)$ by
$$f(\alpha)=I_{D}\left\|\frac{(\cos\alpha)x-(\sin\alpha)y}{\|(\cos\alpha)x-(\sin\alpha)y\|}-x\right\|,\forall\alpha\in L^{0}({\cal F},R).$$
It is obvious that $f$ is a continuous local function and $\|(\cos\alpha)x-(\sin\alpha)y\|>0$ on $D$ for any $\alpha\in L^{0}({\cal F},R)$. Consequently, since $f(0)=0$ and $f(\pi)=2I_{D}$, by Theorem \ref{thm:IVT} there exists $\eta^{(\epsilon)}\in L^{0}({\cal F},R)$ with $0\leq\eta^{(\epsilon)}\leq\pi$ for any $\epsilon\in L^{0}_+\textmd{ with }0<\epsilon\leq 2\textmd{ on }D$ such that $f(\eta^{(\epsilon)})=\epsilon I_{D}$. Denote $v=I_D\|(\cos\eta^{(\epsilon)})x-(\sin\eta^{(\epsilon)})y\|^{-1}((\cos\eta^{(\epsilon)})x-(\sin\eta^{(\epsilon)})y)$, then $\|v\|=I_{D}$ and $\|x-v\|=\epsilon I_{D}$.
\end{proof}

\vspace{3mm}
We can now prove Theorem \ref{thm:expr}.

{\noindent\bf Proof of Theorem \ref{thm:expr}.} For $\epsilon$ and $D$ as assumed, since $\delta^{(2)}_D(\epsilon)\leq \delta_D(\epsilon)\leq \delta^{(1)}_D(\epsilon)$, we only need to show that $\delta^{(1)}_D(\epsilon)\leq \delta_D(\epsilon)\leq \delta^{(2)}_D(\epsilon)$. The proof is divided into two parts.

(Part 1). We show that $\delta^{(1)}_D(\epsilon)\leq \delta_D(\epsilon)$. First of all, since $\textmd{Rank}_D(S)\geq 2$, by Lemma \ref{lem:L3.2} there exist $u_0,v_0\in S$ with $\|u_0\|=\|v_0\|=I_{D}$ such that $\|u_0-v_0\|=\epsilon I_{D}$.

For any $x,y\in S(1)$ such that $B_{xy}\supset D$ and $I_D\|x-y\|\geq\epsilon I_D$, we can write $I_D\|x-y\|=b\epsilon I_D$, where $b\in L^{0}({\cal F},R)$ is such that $bI_D=I_D\|x-y\|\epsilon^{-1}$, clearly $bI_D\geq I_D$. Applying Theorem \ref{thm:HahnB} to $I_D\frac{x+y}{2}$ we obtain an element $x^{\ast}\in S^{\ast}$ such that $$x^{\ast}(I_D\frac{x+y}{2})=I_D\|\frac{x+y}{2}\|, ~~\|x^{\ast}\|=I_{D_1},\eqno(3.5)$$ where $D_1=[I_D\|\frac{x+y}{2}\|>0]$ (obviously, $D_1\subset D$).

First, let $E=[x^{\ast}(I_Dx)\geq x^{\ast}(I_Dy)]\cap D_1$, then we can obtain $u_E,v_E\in S$ with $\|u_E\|=\|v_E\|=I_E$ such that $\|u_E-v_E\|=\epsilon I_E$ and $\|u_E+v_E\|\geq I_E\|x+y\|$.

In fact, let $x_1=I_Ex,y_1=I_Ex+I_Eb^{-1}(y-x)$, then one can see that $\|x_1\|=I_E,\|y_1\|=\|I_E(1-b^{-1})x+I_Eb^{-1}y\|\leq I_E$ and $\|x_1-y_1\|=\epsilon I_E$. Again denote $E^{\prime}=[I_E\|y_1\|>0]$ (certainly, $E^{\prime}\subset E\subset D_1$) and $E^{\prime\prime}=E\backslash E^{\prime}$, then we can handle the problem on $E^{\prime\prime}$ and $E^{\prime}$, respectively.

On $E^{\prime\prime}$: from $I_{E^{\prime\prime}}\|y_1\|=0$ it follows that $I_{E^{\prime\prime}}y_1=\theta$, namely, $I_{E^{\prime\prime}}y=(1-b)I_{E^{\prime\prime}}x$, which in turn implies $bI_{E^{\prime\prime}}=2I_{E^{\prime\prime}}$, so that $I_{E^{\prime\prime}}y=-I_{E^{\prime\prime}}x$ and hence $I_{E^{\prime\prime}}\|x+y\|=0$.

On $E^{\prime}$: without loss of generality, we can suppose that $P(E^{\prime})>0$. Notice that $\|I_{E^{\prime}}x_1\|=I_{E^{\prime}}$, $\|I_{E^{\prime}}y_1\|\leq I_{E^{\prime}}$ and $A_{I_{E^{\prime}}x_1}=A_{I_{E^{\prime}}y_1}=E^{\prime}$, by Proposition \ref{prop:hard} we have two elements $u^{\prime},v^{\prime}\in S$ with $\|u^{\prime}\|=\|v^{\prime}\|=I_{E^{\prime}}$ such that $u^{\prime}-v^{\prime}=I_{E^{\prime}}(I_{E^{\prime}}x_1-I_{E^{\prime}}y_1)=I_{E^{\prime}}b^{-1}(x-y)$ and $\|u^{\prime}+v^{\prime}\|\geq I_{E^{\prime}}\|I_{E^{\prime}}x_1+I_{E^{\prime}}y_1\|=I_{E^{\prime}}\|x_1+y_1\|$.
It is clear that $\|u^{\prime}-v^{\prime}\|=\epsilon I_{E^{\prime}}$. Further, by the choice of $E^{\prime}$ and (3.5)
$I_{E^{\prime}}\|x_1+y_1\|\geq I_{E^{\prime}}x^{\ast}(x_1+y_1)
=2I_{E^{\prime}}x^{\ast}(x_1)+I_{E^{\prime}}x^{\ast}(y_1-x_1)=2I_{E^{\prime}}x^{\ast}(x)+I_{E^{\prime}}b^{-1}x^{\ast}(y-x)
\geq 2I_{E^{\prime}}x^{\ast}(x)+I_{E^{\prime}}x^{\ast}(y-x)=I_{E^{\prime}}\|x+y\|$.

Let $u_E=I_{E^{\prime}}u^{\prime}+I_{E^{\prime\prime}}u_0$ and $v_E=I_{E^{\prime}}v^{\prime}+I_{E^{\prime\prime}}v_0$, then we can see that $u_E$ and $v_E$ are just desired.

Next, denote $F=[x^{\ast}(I_Dx)<x^{\ast}(I_Dy)]\cap D_1$, by the symmetry of $x$ and $y$ we can also have $u_F,v_F\in S$ with $\|u_F\|=\|v_F\|=I_F$ such that $\|u_F-v_F\|=\epsilon I_F$ and $\|u_F+v_F\|\geq I_F\|x+y\|$.

At last, let $u=I_Eu_E+I_Fu_F+I_{D\backslash D_1}u_0$ and $v=I_Ev_E+I_Fv_F+I_{D\backslash D_1}v_0$, then we can see that $\|u\|=\|v\|=I_D,\|u-v\|=\epsilon I_D$ and $\|u+v\|\geq I_D\|x+y\|$. This shows that $\delta^{(1)}_D(\epsilon)\leq \delta_D(\epsilon)$.

(Part 2). We show that $\delta_D(\epsilon)\leq \delta^{(2)}_D(\epsilon)$. For any fixed $x,y\in S$ such that $B_{xy}\supset D,\|x\|,\|y\|\leq 1$ and $I_D\|x-y\|\geq\epsilon I_D$, let $E=[\|x\|\geq\|y\|]\cap D, F=[\|x\|<\|y\|]\cap D$.

Denote $x_1=I_E\|x\|^{-1}x$ and $y_1=I_E\|x\|^{-1}y$, then $\|x_1\|=I_E,\|y_1\|\leq I_E$ and $A_{x_1y_1}=E$. By Proposition \ref{prop:hard} there exist $u_1,v_1\in S$ with $\|u_1\|=\|v_1\|=I_E$ such that $u_1-v_1=I_E(x_1-y_1)=I_E\|x\|^{-1}(x-y)$ (clearly, $\|u_1-v_1\|\geq I_E\|x-y\|\geq\epsilon I_E$ since $\|x\|\leq 1$) and $\|u_1+v_1\|\geq I_E\|x_1+y_1\|\geq I_E\|x+y\|$.

Similarly, considering $x_2=I_F\|y\|^{-1}x$ and $y_2=I_F\|y\|^{-1}y$ we can have $u_2,v_2\in S$ with $\|u_2\|=\|v_2\|=I_F$ such that $\|u_2-v_2\|\geq\epsilon I_F$ and $\|u_2+v_2\|\geq I_F\|x+y\|$.

Set $u=u_1+u_2$ and $v=v_1+v_2$, then $\|u\|=\|v\|=I_D,\|u-v\|\geq\epsilon I_D$ and $\|u+v\|\geq I_D\|x+y\|$, which completes the proof.\hfill$\Box$

\begin{remark}
If we define a new quantity $$\delta_D^{(3)}(\epsilon)
=\bigwedge\left\{{I}_D-{I}_D\left\|\frac{x+y}{2}\right\|:x, y\in U(1)\textmd{ with } B_{xy}\supset D\textmd{ and
}{I}_D\|x-y\|=\epsilon{I}_D\right\},$$
then by the relation $\delta^{(1)}_D(\epsilon)\geq\delta^{(3)}_D(\epsilon)\geq\delta_D^{(2)}(\epsilon)$ we can also see that $\delta_D(\epsilon)=\delta_D^{(3)}(\epsilon)$ for any
$D\in\widetilde{\cal F}$ with $D\subset G(S)$ and $P(D)>0$ and $\epsilon\in L^{0}_{+}$ such that
$0<\epsilon\leq 2$ on $D$. We should point out that Theorem \ref{thm:expr} automatically hold for every complete complex $RN$ module since a complex $RN$ module $(S,\|\cdot\|)$ can always be viewed as a real $RN$ module with $G(S)=H(S)$.
\end{remark}

For the proof of Theorem \ref{thm:con} we remain to need the following preparations.

First, Lemma \ref{lem:L3.2} means that the estimation of modulus of random convexity given in \cite[Lemma 4.3]{Guo-Zeng} can be improved to the following:

\begin{lemma}\label{lem:L3.3}
Let $(S,\|\cdot\|)$ be a complete $RN$ module over $R$ with base $(\Omega,{\cal F},P)$ such that $P(G(S))>0$, then $\delta_{G(S)}(\epsilon)\leq\frac{\epsilon}{2}I_{G(S)}$ for any $\epsilon\in L^{0}_+$ such that $0<\epsilon\leq 2$ on $G(S)$.
\end{lemma}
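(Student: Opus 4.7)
The plan is to follow the classical argument verbatim: given unit vectors $x, y$ with $\|x - y\| = \epsilon$, the triangle inequality $\|x\| \leq \|(x+y)/2\| + \|(x-y)/2\|$ yields $\|(x+y)/2\| \geq 1 - \epsilon/2$, and hence $1 - \|(x+y)/2\| \leq \epsilon/2$ witnesses $\delta(\epsilon) \leq \epsilon/2$. The only nontrivial point in our random setting is the existence of $x, y \in S$ with $\|x\| = \|y\| = I_{G(S)}$ realizing the prescribed random distance $\epsilon I_{G(S)}$; this is supplied exactly by Lemma \ref{lem:L3.2}, whose hypothesis $\textmd{Rank}_{G(S)}(S) \geq 2$ is built into the definition of $G(S)$ via Proposition \ref{prop:G(S)}(2).

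First I would apply the implication (1)$\Rightarrow$(2) of Lemma \ref{lem:L3.2} with $D = G(S)$ to produce $x, y \in S$ satisfying $\|x\| = \|y\| = I_{G(S)}$ and $\|x - y\| = \epsilon I_{G(S)}$. Since $P(G(S)) > 0$, both $x$ and $y$ lie in $S(1)$; moreover $A_x = A_y = G(S)$, and since $\epsilon > 0$ on $G(S)$ one has $A_{x-y} = G(S)$ as well, so that $B_{xy} = A_{xy} \cap A_{x-y} = G(S) \supset G(S)$. Thus this pair $(x,y)$ is feasible in the infimum defining $\delta_{G(S)}(\epsilon)$ in (1.1), and it even saturates the constraint $I_{G(S)}\|x - y\| \geq \epsilon I_{G(S)}$ with equality.

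Next, I would invoke the triangle inequality in the form $I_{G(S)} = \|x\| \leq \|(x+y)/2\| + (1/2)\|x - y\|$, which rearranges to $\|(x+y)/2\| \geq I_{G(S)} - (\epsilon/2) I_{G(S)}$, and therefore $I_{G(S)} - I_{G(S)}\|(x+y)/2\| \leq (\epsilon/2) I_{G(S)}$. Since $\delta_{G(S)}(\epsilon)$ is the infimum of a collection of quantities that contains this particular one, the asserted inequality $\delta_{G(S)}(\epsilon) \leq (\epsilon/2) I_{G(S)}$ follows. There is essentially no obstacle beyond correctly invoking Lemma \ref{lem:L3.2}; the only bookkeeping to watch is the verification of the support condition $B_{xy} \supset G(S)$ demanded by definition (1.1), which reduces exactly to the observation above that $\|x\| = \|y\| = I_{G(S)}$ forces $A_x = A_y = G(S)$ and that $\epsilon > 0$ on $G(S)$ forces $A_{x-y} = G(S)$.
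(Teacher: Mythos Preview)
Your proof is correct and follows exactly the approach indicated by the paper, which does not give a written proof but simply notes that Lemma~\ref{lem:L3.2} yields the desired improvement; you have correctly unpacked this by producing, via Lemma~\ref{lem:L3.2} with $D=G(S)$, elements $x,y$ with $\|x\|=\|y\|=I_{G(S)}$ and $\|x-y\|=\epsilon I_{G(S)}$, verifying the feasibility condition $B_{xy}\supset G(S)$, and then applying the triangle inequality to the decomposition $x=\tfrac{x+y}{2}+\tfrac{x-y}{2}$.
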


We can now in turn obtain Proposition \ref{prop:key} below, which improves \cite[Proposition 4.5]{Guo-Zeng} in that Proposition \ref{prop:key} has removed Condition ($\triangle$) originally imposed on \cite[Proposition 4.5]{Guo-Zeng}.

\begin{proposition}\label{prop:key}
 Let $(S,\|\cdot\|)$ be a complete random
uniformly convex $RN$ module over $K$ with base
$(\Omega,{\cal F},P)$ and $p$ a number such that $1<p<+\infty$. Then for each
number $\epsilon\in (0,2]$ there exists a number
$\delta_p(\epsilon)\in (0,1)$ such that (a) implies (b) for any
$x,y$ in $S$ and any
$D\in\widetilde{\cal F}$ with $D\subset B_{xy} \textmd{ and } P(D)>0$:\\
\indent(a) ${I}_D\|x\|\leq {I}_D,{I}_D\|y\|\leq {I}_D$
and ${I}_D\|x-y\|\geq \epsilon{I}_D$;\\
\indent(b) ${I}_D\left\|\frac{x+y}{2}\right\|^p\leq
{I}_D(1-\delta_p(\epsilon))\frac{\|x\|^p+\|y\|^p}{2}$.\\
Furthermore, (a) can be replaced by\\
\indent(c) ${I}_D\|x-y\|\geq \epsilon{I}_D(\|x\|\vee \|y\|)$.
\end{proposition}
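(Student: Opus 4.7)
The plan is to adapt the argument of \cite[Proposition 4.5]{Guo-Zeng}, removing its reliance on Condition ($\triangle$) by invoking Lemma \ref{lem:L3.2} and Theorem \ref{thm:expr}. Lemma \ref{lem:L3.2} tells us that ($\triangle$) holds on any set of quasi-rank at least $2$; in particular it is automatic on $G(S)$, while on $H(S)\setminus G(S)$ the module $S$ is literally one-dimensional by Proposition \ref{prop:G(S)}(1). Accordingly, I will split $D = D_1 \cup D_2$ with $D_1 = D\cap G(S)$ and $D_2 = D\setminus G(S)$ and handle each piece separately. The preliminary reductions are standard: (c) implies (a) after dividing by $M := \|x\|\vee\|y\|$ on $[M>0]\cap D$ (on $[M=0]\cap D$ both sides of (b) vanish) and then rescaling by $M^p$; replacing $x,y$ by $I_Dx,I_Dy$ lets me further assume $\|x\|,\|y\|\leq 1$ everywhere. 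Hence I focus on (a)$\Rightarrow$(b) in this normalized setting.

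On $D_2$, Proposition \ref{prop:G(S)}(1) gives $I_{D_2}x = \alpha x_0$ and $I_{D_2}y = \beta x_0$ for some $\alpha,\beta\in L^0({\cal F},R)$ with $|\alpha|,|\beta|\leq I_{D_2}$ and $|\alpha-\beta|\geq\epsilon I_{D_2}$; the inequality (b) restricted to $D_2$ becomes the scalar Clarkson inequality, applied $\omega$-wise, and yields some real $\delta^{\rm sc}_p(\epsilon)\in(0,1)$. On $D_1\subset G(S)$, random uniform convexity supplies a real $c_0(\epsilon)\in(0,1)$ with $\delta_{G(S)}(\epsilon I_{G(S)}) \geq c_0(\epsilon)\,I_{G(S)}$; by Theorem \ref{thm:expr} this gives the $U(1)$-expression $\delta^{(2)}_{D_1}(\epsilon I_{D_1})\geq c_0(\epsilon)\,I_{D_1}$, so that from (a) one obtains the linear bound
\begin{equation*}
I_{D_1}\Bigl\|\tfrac{x+y}{2}\Bigr\|\leq\bigl(1-c_0(\epsilon)\bigr)I_{D_1}.
\end{equation*}

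The main obstacle I foresee is the passage from this linear bound to the $L^p$-bound of (b) on $D_1$. I plan to do it $\omega$-wise by a classical Clarkson-type manipulation: put $m=\|x\|\vee\|y\|$, observe $m\geq\epsilon/2$ from the triangle inequality applied to $\|x-y\|\geq\epsilon$, apply the linear bound to $x/m,y/m$ to get $\|\tfrac{x+y}{2}\|\leq m(1-c_0(\epsilon))$, and then invoke the strict convexity of $t\mapsto t^p$ via a case split on whether $\min(\|x\|,\|y\|)$ is close to $m$ or much smaller. This produces a real $\delta^{\rm mod}_p(\epsilon)\in(0,1)$ for which the desired inequality holds on $D_1$. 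Setting $\delta_p(\epsilon)=\delta^{\rm sc}_p(\epsilon)\wedge\delta^{\rm mod}_p(\epsilon)$ and combining the estimates through $I_D = I_{D_1}+I_{D_2}$ yields (b) on all of $D$.
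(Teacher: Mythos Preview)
Your approach is correct and runs broadly parallel to the paper's, but the toolset you invoke on $G(S)$ is genuinely different. Both arguments split $D$ along $G(S)$ and $H(S)\setminus G(S)$. On the complement the paper appeals to Corollary~\ref{cor:mod} (the modulus equals $I_{H(S)\setminus G(S)}$ there) to evaluate the auxiliary function $\varphi(t)$ from \cite[Proposition~4.5]{Guo-Zeng} explicitly; your route via Proposition~\ref{prop:G(S)}(1) and the pointwise scalar inequality is an equivalent way of recording the same one--dimensionality.

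The substantive divergence is on $D\cap G(S)$. The paper does \emph{not} use Theorem~\ref{thm:expr}; instead it keeps the original scheme of \cite[Proposition~4.5]{Guo-Zeng}, which works with the $S(1)$--modulus and needs the \emph{upper} bound $\delta_{G(S)}(\epsilon)\le\frac{\epsilon}{2}I_{G(S)}$ of Lemma~\ref{lem:L3.3} (this is exactly where Condition~$(\triangle)$ entered in \cite{Guo-Zeng}, via their Lemma~4.3). You instead pass to the $U(1)$--modulus $\delta^{(2)}$ through Theorem~\ref{thm:expr}, so the linear bound $I_{D_1}\|\tfrac{x+y}{2}\|\le(1-c_0(\epsilon))I_{D_1}$ applies directly to $x,y\in U(1)$ and to their rescalings $x/m,\,y/m$; your subsequent case split on $\min(\|x\|,\|y\|)/m$ then upgrades this to the $p$--power inequality without ever needing the upper bound on the modulus. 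In short: the paper's argument is economical (only Lemma~\ref{lem:L3.3} is new, the rest is inherited from \cite{Guo-Zeng}), whereas yours is more self-contained and exhibits a further application of Theorem~\ref{thm:expr}, at the cost of invoking that heavier result. Both yield the proposition; just be aware that when you write it up you should note the trivial case $P(G(S))=0$ (then $D_1$ is null and only your $D_2$ estimate is needed), and that in the complex case $G(S)=H(S)$ so $D_2$ is empty.
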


\begin{proof} It is similar to that of \cite[Proposition 4.5]{Guo-Zeng}, except for some key modifications below.

Along the idea of proof of \cite[Proposition 4.5]{Guo-Zeng}, before we consider Part (1) of Case (2) in the proof of \cite[Proposition 4.5]{Guo-Zeng}, we only need to first consider the corresponding problem on $D_1\cap (H(S)\backslash G(S))$ rather than on $D_1$ as in the proof of \cite[Proposition 4.5]{Guo-Zeng}. Therefore, by Corollary \ref{cor:mod} we have $\delta_{H(S)\backslash G(S)}(\gamma)=I_{H(S)\backslash G(S)}$ for any $\gamma\in L^{0}_{+}$ with $0<\gamma\leq 2$ on $H(S)\backslash G(S)$, hence $$I_{D_1\cap (H(S)\backslash G(S))}\varphi(t)=I_{D_1\cap (H(S)\backslash G(S))}\frac{\left(\frac{1-t}{2}\right)^p}{\frac{1+t^p}{2}}.$$

We further notice that the real function $f(s)=\left(\frac{1-s}{2}\right)^p/(\frac{1+s^p}{2})$ is strictly decreasing on $[0,1]$ and take $c_1(p)=f(0)=(1/2)^{p-1}$, then we have $$I_{D_1\cap (H(S)\backslash G(S))}\varphi(t)\leq c_1(p)I_{D_1\cap (H(S)\backslash G(S))}.$$

Next, we replace $D_{11}$ and $D_{12}$ in the original proof of \cite[Proposition 4.5]{Guo-Zeng} with $D_{11}^{\prime}=D_1\cap G(S)\cap [\epsilon^{\prime}<\frac{\epsilon}{2}]$ and $D_{12}^{\prime}=D_1\cap G(S)\cap [\epsilon^{\prime}\geq\frac{\epsilon}{2}]$, respectively. By Lemma \ref{lem:L3.3}, the remaining part of the proof of Proposition \ref{prop:key} is the same as that of \cite[Proposition 4.5]{Guo-Zeng}.
\end{proof}

\vspace{3mm}
We can now prove Theorem \ref{thm:con}.\\
{\noindent\bf Proof of Theorem \ref{thm:con}.} The sufficiency is already known by \cite[Theorem 4.3]{Guo-Zeng}. We only need to prove the necessity. From the proof of \cite[Theorem 4.4]{Guo-Zeng}, if \cite[Proposition 4.5]{Guo-Zeng} is replaced by Proposition \ref{prop:key} then one can see that Theorem \ref{thm:con} always holds.\hfill $\Box$

\begin{remark} It is well known that $L^{p}(\Omega,{\cal F},P)$ is uniformly convex for $1<p<+\infty$ \cite{Clarkson} and that $L^{p}({\cal F},X)$ is uniformly convex iff $X$ is
uniformly convex, where $1<p<+\infty$ and $X$ is an arbitrary Banach space \cite{Day}. Theorem \ref{thm:con} and \cite[Theorem 4.2]{Guo-Zeng} together imply the two well known facts.
\end{remark}

\end{document}